\documentclass[11pt,reqno]{amsart}
\usepackage[T1]{fontenc}
\usepackage[utf8]{inputenc}
\usepackage{lmodern}
\usepackage{microtype}
\usepackage{amsmath,amssymb,amsthm}
\usepackage{xcolor}
\usepackage{cite}
\usepackage[
colorlinks=true,
citecolor=blue,
linkcolor=blue,
urlcolor=blue
]{hyperref}

\numberwithin{equation}{section}
\newtheorem{theorem}{Theorem}[section]
\newtheorem{proposition}[theorem]{Proposition}
\newtheorem{lemma}[theorem]{Lemma}
\theoremstyle{definition}

\newtheorem{remark}[theorem]{Remark}

\title[Mass-Capacity Rigidity]
{Mass-Capacity Rigidity for Asymptotically Flat Manifolds with Arbitrary Ends}
\author[Y. Bi]{Yuchen Bi}
\address[Yuchen Bi]{Mathematical Institute, Department of Pure Mathematics,
University of Freiburg, Ernst-Zermelo-Stra{\ss}e 1,
D-79104 Freiburg im Breisgau, Germany}
\email{yuchen.bi@math.uni-freiburg.de}
\hypersetup{
pdftitle={Mass-Capacity Rigidity for Asymptotically Flat Manifolds with Arbitrary Ends},
pdfauthor={Yuchen Bi}
}

\subjclass[2020]{Primary 53C21; Secondary 53C24, 83C40}

\keywords{Mass-capacity inequality, asymptotically flat manifolds, scalar curvature, Bakry--\'Emery Ricci curvature}

\begin{document}

\begin{abstract}
We characterize equality in the mass-capacity inequality for asymptotically flat manifolds with arbitrary ends in every dimension \(n\ge 3\). If \(u\) is the capacity minimizer, then the equality forces \((M,u^{4/(n-2)}g)\) to be isometric to \((\mathbb R^n\setminus S, g_{\mathrm{Euc}})\) for a compact set \(S\) satisfying \(\operatorname{dim}_{\mathcal H}S\le (n-2)/2\). A key ingredient is that conformal changes by positive harmonic functions transform nonnegative Ricci curvature into nonnegative Bakry--\'Emery Ricci curvature of effective dimension \(4-n\).
\end{abstract}

\maketitle

\section{Introduction}
Mass-capacity inequalities at a distinguished asymptotically flat end go back to Bray~\cite[Theorem~8]{Bray}, who proved the three-dimensional inequality when all the remaining ends are asymptotically flat. Related inequalities for asymptotically flat three-manifolds with boundary were obtained by Bray and Miao~\cite{BrayMiao}, Hirsch, Miao, and Tam~\cite{HirschMiaoTam}, and Miao~\cite{MiaoMassCapacity}. Under additional topological assumptions and a lower Ricci curvature bound, Miao~\cite[Corollary~3.1]{MiaoImplications} later proved the inequality for manifolds whose other end need not be asymptotically flat.

With no asymptotic conditions imposed on the other ends, the mass--capacity inequality for a distinguished asymptotically flat end was deduced in \cite{BZ} from the corresponding positive mass theorem. Rigidity was established there when the manifold is spin. The positive mass theorem in this setting was proved for \(3\le n\le 7\) in \cite{LLU,LUY,Zhu}. The density results \cite[Theorem~1.3]{LLU}, \cite[Proposition~3.2]{Zhu}, together with the positive mass theorem of Brendle--Wang~\cite{BW}, extend this theorem to every dimension \(n\ge 3\). Here we remove the spin assumption from the rigidity statement in \cite{BZ}.

Let \((M^n,g)\), \(n\ge 3\), be a connected complete Riemannian manifold without boundary. Fix a compact set \(K\subset M\) and a component \(E\) of \(M\setminus K\). We call \((M,g,E)\) an \emph{asymptotically flat manifold with arbitrary ends} if there exist \(R>0\), \(\tau>(n-2)/2\), and a diffeomorphism
\[
x=(x^1,\ldots,x^n):E\longrightarrow\mathbb R^n\setminus\overline{B_R}
\]
such that, writing \(r=|x|\),
\[
g_{ij}-\delta_{ij}=O_2(r^{-\tau}),\qquad R_g\in L^1(E).
\]
We call such \(x\) an asymptotically flat coordinate system on \(E\). No asymptotic condition is imposed on the other ends.

We write \(f=O_k(r^\mu)\) if every coordinate derivative of order \(j\), \(0\le j\le k\), is \(O(r^{\mu-j})\), and use \(o_k(r^\mu)\) analogously. Let \(S_r\subset E\) be the coordinate sphere of radius \(r\), and let \(\nu\) and \(d\sigma\) denote its outward unit normal and area measure with respect to the Euclidean metric. The ADM mass of \(E\) is
\[
m_E(g):=\frac{1}{2(n-1)|\mathbb S^{n-1}|}\lim_{r\to\infty}\int_{S_r}(\partial_jg_{ij}-\partial_ig_{jj})\nu^i d\sigma.
\]
Fix \(\eta_E\in C^\infty(M)\) that, outside a compact set, equals \(1\) on \(E\) and \(0\) on every other end. Let \(\mathcal D_0(g)\) be the completion of \(C_c^\infty(M)\) with respect to the norm
\[
\|\varphi\|_{\mathcal D_0(g)}:=\left(\int_M|\nabla_g\varphi|_g^2d\mu_g\right)^{1/2},
\]
and set \(\mathcal A_E:=\eta_E+\mathcal D_0(g)\). The affine space \(\mathcal A_E\) is independent of the choice of \(\eta_E\). The capacity of \(E\) is
\begin{equation}\label{eq:capacity-definition}
 \mathfrak c_E(g):=\frac{1}{(n-2)|\mathbb S^{n-1}|} \inf_{\psi\in\mathcal A_E}\int_M|\nabla_g\psi|_g^2d\mu_g.  
\end{equation}
The infimum is attained by a unique \(u\in\mathcal A_E\). The function \(u\) is positive and harmonic. On \(E\), it has the asymptotic expansion
\[
u=1-\mathfrak c_E(g)r^{2-n}+o_2(r^{2-n})
\]
as \(r\to\infty\); see, e.g., \cite[Corollary~2.2]{BZ}.

We state the mass--capacity inequality proved in \cite{BZ} together with our rigidity result.

\begin{theorem}\label{thm:main}
Let \((M^n,g,E)\), \(n\ge 3\), be an asymptotically flat manifold with arbitrary ends, and let \(u\) be the minimizer in \eqref{eq:capacity-definition}. If \(R_g\ge 0\), then
\[
m_E(g)\ge 2\mathfrak c_E(g).
\]
Moreover, if equality holds, then \((M,u^{4/(n-2)}g)\) is isometric to \((\mathbb R^n\setminus S, g_{\mathrm{Euc}})\) for some bounded closed set \(S\subset\mathbb R^n\) satisfying 
\[
\operatorname{Cap}_{1+\frac{2}{n},\frac{n}{2}}(S)=0,\qquad\dim_{\mathcal H}S\le\frac{n-2}{2}.
\]
\end{theorem}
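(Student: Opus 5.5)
The inequality itself is taken from \cite{BZ}. The idea there is to pass to the conformal metric \(\tilde g:=u^{4/(n-2)}g\). Because \(u\) is harmonic, \(R_{\tilde g}=u^{-(n+2)/(n-2)}\bigl(R_g u-\tfrac{4(n-1)}{n-2}\Delta_g u\bigr)\ge 0\). On \(E\) we have \(u=1-\mathfrak c_E r^{2-n}+o_2(r^{2-n})\), so \(\tilde g\) is again asymptotically flat on \(E\), with mass
\[
m_E(\tilde g)=m_E(g)-2\mathfrak c_E(g).
\]
The other ends of \((M,\tilde g)\) are typically incomplete, since \(u\to0\) there in an averaged sense. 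Still, \(\tilde g\) is a metric with arbitrary (possibly incomplete) ends in the sense covered by the density results \cite{LLU,Zhu} combined with \cite{BW}. The positive mass theorem for such metrics then gives \(m_E(\tilde g)\ge0\).

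\textbf{Rigidity, step 1: the conformal metric is scalar flat and Ricci flat.} Assume \(m_E(\tilde g)=0\). First I would show \(R_g\equiv0\) and \(\operatorname{Ric}_{\tilde g}\equiv0\) by the usual variational argument. Suppose \(R_{\tilde g}\not\equiv0\), or suppose \(\operatorname{Ric}_{\tilde g}\neq0\) somewhere. In the second case, deform \(\tilde g\) by \(\tilde g-t\eta\operatorname{Ric}_{\tilde g}\), with \(\eta\) a compactly supported cutoff. Then conformally correct to zero scalar curvature with a conformal factor solving \(-c_n\Delta w+Rw=0\), \(w\to1\) on \(E\). The correction has to be done on the incomplete manifold, so I would do it at the level of \(g\): replace the capacity problem for the deformed \(g\), which keeps the structure ``complete metric with one AF end and a harmonic minimizer.'' The first variation then produces a manifold with arbitrary ends whose mass is negative, which contradicts the inequality just proved. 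The resulting ``zero mass forces Ricci flatness'' step is the most technical one, but it is standard. I would phrase the deformations on \(g\) so that completeness is never lost.

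\textbf{Step 2: the conformal metric is flat (the main obstacle).} Pulling Ricci flatness back to \(g\) gives a nonnegativity statement rather than flatness. This is where I would use the key ingredient from the abstract. For \(h=u^{4/(n-2)}g\) with \(u\) positive harmonic, a direct computation shows that \(\operatorname{Ric}_h\ge0\) forces the Bakry--\'Emery tensor \(\operatorname{Ric}^{N}_{f}\) of the weighted manifold \((M,g,e^{-f})\) to be \(\ge0\), with \(f\) a multiple of \(\log u\) and effective dimension \(N=4-n\). I would verify this by writing \(\operatorname{Ric}_h\) in terms of \(\nabla^2\log u\) and \(d\log u\otimes d\log u\), using \(\Delta u=0\) to remove the trace term.

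Next I would apply the weighted theory for negative effective dimension: a Bochner formula, a splitting theorem and a volume comparison for \(\operatorname{Ric}^N_f\ge0\) with \(N\le 1\). This is applied on the complete manifold \((M,g)\), which is where completeness is available. Combined with \(g\) being asymptotically flat with \(\mathrm{Ric}\to0\) on \(E\), a rigidity argument should give that \(\tilde g\) has zero curvature. I would try to get this from the weighted volume ratio at infinity on \(E\) being Euclidean while being monotone. Flatness of a Ricci-flat \(\tilde g\) follows in dimension 3 automatically. In general dimension it follows from the splitting and volume rigidity: the asymptotic volume ratio equals one along \(E\). Then the developing map of the flat, simply-connected-at-infinity end gives an isometric immersion \((M,\tilde g)\to\mathbb R^n\). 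This immersion is injective, since \(E\) maps onto an exterior region with degree one and the immersion is a local isometry that is proper onto its image. Hence \((M,\tilde g)\cong(\mathbb R^n\setminus S,g_{\mathrm{Euc}})\) with \(S\) closed and bounded.

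\textbf{Step 3: size of \(S\).} The function \(v:=u^{-1}\) is positive and Euclidean-harmonic on \(\mathbb R^n\setminus S\); this is Kelvin-type conformal covariance, since \(g=v^{4/(n-2)}g_{\mathrm{Euc}}\) with \(\Delta_g u=0\). Completeness of \(g=v^{4/(n-2)}\delta\) near \(S\), together with finiteness of \(\int|\nabla_g u|^2\) and of the capacity, gives integrability bounds on \(v\) near \(S\). I expect these to place \(v\) in a Sobolev class \(W^{1+2/n,\,n/2}\)-type near \(S\) while \(v\) blows up on \(S\). By the standard removable-singularity/capacity duality, \(S\) must then have \(\operatorname{Cap}_{1+\frac2n,\frac n2}(S)=0\). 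The Hausdorff dimension bound \(\dim_{\mathcal H}S\le n-(1+\tfrac2n)\tfrac n2=\tfrac{n-2}{2}\) follows from the known relation between Bessel capacity and Hausdorff measure.
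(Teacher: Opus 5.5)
Your Step~2 is where the argument breaks down for $n\ge4$.

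The tools you name do not apply in the form you need. Bishop--Gromov monotonicity and splitting for $\operatorname{Ric}^{N}_f\ge0$ with $N=4-n\le0$ are available only under extra hypotheses, such as bounds on $f$ or reparametrized distances. Also, ``asymptotic volume ratio one implies flat'' is a rigidity statement for complete manifolds with $\operatorname{Ric}\ge0$, so it cannot be applied to the incomplete $\tilde g$.

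More fundamentally, no argument on the single weighted manifold $(M,g,e^{-f})$ can detect the rigidity. Take the model case $\tilde g=g_{\mathrm{Euc}}$ on $\mathbb R^n\setminus\{0\}$ with $u^{-1}=1+\mathfrak c_E|x|^{2-n}$. There $g$ is the non-flat Schwarzschild metric, $\operatorname{Ric}^{4-n}_f(g)$ is nonnegative but not zero, and $m_E(g)=2\mathfrak c_E$ is not small.

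The missing idea is to interpolate between $g$ and $\tilde g$ by the complete metrics $g_t=((1-t)+tu^{-1})^{4/(n-2)}\tilde g$. With $F_t=2\log((1-t)+tu^{-1})$, they satisfy $\operatorname{Ric}^{4-n}_{F_t}(g_t)\ge0$ by the same conformal computation you describe, and $m_E(g_t)=2t\mathfrak c_E\to0$. On each $g_t$ the paper solves a drift-harmonic equation for functions $X_t^i$ asymptotic to coordinates. The drift is $F_t+F_t/(n-1)$, chosen so that the Bochner remainder has a positive coefficient also when $n=4$. In the integrated weighted Bochner formula, the boundary term at $E$ is the mass, and the cutoff error on the other ends is controlled by completeness of $g_t$ and finite energy. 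This gives $\sum_i\int|\operatorname{Hess}X_t^i|^2\,d\mathfrak m_t\le C\,m_E(g_t)$. Letting $t\to0$ produces globally defined $X^i$ with parallel differentials, hence a local isometry $(M,\tilde g)\to\mathbb R^n$ on all of $M$.

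Your injectivity step is also unjustified. Since $\tilde g$ is incomplete, a local isometry $M\to\mathbb R^n$ need not be proper onto its image. Degree one on $E$ does not prevent overlapping sheets over a bounded region: for instance, glue to $\mathbb R^n\setminus\overline{B_1}$ an abstract flat tube that dips into $B_1$ and re-emerges over the exterior. Injectivity comes from the completeness of the scalar-flat metric $g=u^{-4/(n-2)}\tilde g$, via the Schoen--Yau Liouville theorem \cite{SchoenYau88} (see \cite[Theorem~1.3]{LUY}). Similarly, for $n=3$ a developing map on $M$ requires $M$ itself to be simply connected \cite[Lemma~3.8]{BZ}; a simply connected end is not enough.

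There are three smaller issues. First, the positive mass theorem with arbitrary ends is for complete metrics, so it does not directly apply to the incomplete $\tilde g$. Using $(u+\varepsilon)/(1+\varepsilon)$ as conformal factor keeps completeness and gives the inequality as $\varepsilon\to0$. Second, in Step~1 the deformed metric loses $R\ge0$. You therefore need the perturbative bound $m_E(\gamma)\ge\frac{1}{2(n-1)|\mathbb S^{n-1}|}\inf_{\mathcal A_E}Q_\gamma$ of Lemma~\ref{lem:perturbative-conformal-mass-bound}, not just the inequality for $R\ge0$. Third, Step~3 is exactly Karakhanyan's theorem \cite[Theorem~1.1]{Karakhanyan}, and you should cite it rather than sketch an expected argument.
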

Here, for \(\alpha>0\) and \(1<p<\infty\), \(\operatorname{Cap}_{\alpha,p}\) denotes the Bessel capacity
\[
\operatorname{Cap}_{\alpha,p}(S):=\inf\left\{\|f\|_{L^p(\mathbb R^n)}^p:f\in L^p(\mathbb R^n), f\ge 0, G_\alpha*f\ge 1\text{ on }S\right\},
\]
where \(G_\alpha\) is the inverse Fourier transform of \((1+|\xi|^2)^{-\alpha/2}\). If \(\alpha p<n\), then \(\operatorname{Cap}_{\alpha,p}(S)=0\) implies \(\dim_{\mathcal H}S\le n-\alpha p\) \cite[Corollary~5.1.14]{AdamsHedberg}.

Set \(\bar g:=u^{4/(n-2)}g\). In the equality case, a conformal deformation argument shows that \(\bar g\) is Ricci-flat and \(m_E(\bar g)=0\). The metric \(\bar g\) need not be complete. In dimension three, Ricci-flatness implies flatness. For \(n\ge 4\), this implication fails. For \(0<t<1\), set
\[
g_t:=\left((1-t)+tu^{-1}\right)^{\frac{4}{n-2}}\bar g,\qquad F_t:=2\log\left((1-t)+tu^{-1}\right).
\]
Since \(g_t\ge t^{4/(n-2)}g\), each \(g_t\) is complete. Moreover, \(g_t\to\bar g\) locally smoothly as \(t\to 0\), while
\[
\operatorname{Ric}_{F_t}^{4-n}(g_t)\ge 0,\qquad m_E(g_t)=2t\mathfrak c_E(g).
\]
The use of \(4-n\) is motivated by the observation in recent work \cite{BZpsc} that positivity of weighted scalar curvature becomes easier to preserve under conformal changes as the reciprocal  of the effective dimension \(N\) decreases, provided that \(1/N\le 1/n\).

We solve a weighted harmonic equation for functions \(X_t^1,\ldots,X_t^n\) asymptotic to the Euclidean coordinate functions. Integrating a weighted Bochner formula gives an \(L^2\) estimate for their Hessians in terms of \(m_E(g_t)\). Since \(m_E(g_t)\to 0\), the limiting functions therefore define a local isometry
\[
X:(M,\bar g)\longrightarrow\mathbb R^n.
\]
In dimension three, one instead takes \(X\) to be a developing map for the flat metric \(\bar g\). In either case, the Schoen--Yau Liouville theorem~\cite[Theorem~1.3]{LUY} shows that \(X\) is injective. The complement of \(X(M)\) is a bounded closed set \(S\), and \(X\) identifies \((M,\bar g)\)  with \((\mathbb R^n\setminus S,g_{\mathrm{Euc}})\). Since \(g\) is complete and scalar-flat, Karakhanyan's theorem~\cite[Theorem~1.1]{Karakhanyan} gives \(\operatorname{Cap}_{1+2/n,n/2}(S)=0\).

\section{Conformal deformation and Ricci flatness}

\subsection{Mass and the conformal Laplacian}
Recall that \(\mathcal A_E=\eta_E+\mathcal D_0(g)\). Set \(c_n:=4(n-1)/(n-2)\) and \(Q_g(\psi):=\int_M(c_n|\nabla_g\psi|_g^2+R_g\psi^2)d\mu_g\). For every positive function \(\phi\),
\begin{equation}\label{eq:conformal-scalar-curvature}
R_{\phi^{4/(n-2)}g}=\phi^{-\frac{n+2}{n-2}}(-c_n\Delta_g\phi+R_g\phi).
\end{equation}
If \(\phi=1+br^{2-n}+o_1(r^{2-n})\) on \(E\), then
\begin{equation}\label{eq:conformal-mass}
m_E\left(\phi^{4/(n-2)}g\right)=m_E(g)+2b.
\end{equation}

\begin{proposition}\label{prop:conformal-mass-bound}
If \(R_g\ge 0\), then
\[
m_E(g)\ge\frac{1}{2(n-1)|\mathbb S^{n-1}|}\inf_{\psi\in\mathcal A_E}Q_g(\psi)\ge 2\mathfrak c_E(g).
\]
\end{proposition}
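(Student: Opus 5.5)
The second inequality is elementary; the first is where the positive mass theorem enters.

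\emph{Second inequality.} Since \(R_g\ge 0\), every \(\psi\in\mathcal A_E\) satisfies
\[
Q_g(\psi)\ge c_n\int_M|\nabla_g\psi|_g^2\,d\mu_g\ge c_n(n-2)|\mathbb S^{n-1}|\,\mathfrak c_E(g).
\]
Dividing by \(2(n-1)|\mathbb S^{n-1}|\) and using \(c_n(n-2)=4(n-1)\), the right-hand side becomes \(2\mathfrak c_E(g)\).

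\emph{First inequality: setting up the minimizer.} Because \(R_g\ge0\), \(R_g\in L^1\) and \(R_g=O(r^{-\tau-2})\) on \(E\), the form \(Q_g\) is coercive on \(\mathcal D_0(g)\) modulo the fixed shift \(\eta_E\). I will minimize \(Q_g\) over \(\mathcal A_E\) by the direct method. The minimizer \(\phi\) solves \(-c_n\Delta_g\phi+R_g\phi=0\) weakly, hence smoothly by elliptic regularity. Replacing \(\phi\) by \(|\phi|\) does not increase \(Q_g\), so \(\phi\ge 0\). The strong maximum principle for \(-c_n\Delta+R_g\) with \(R_g\ge0\) then gives \(\phi>0\), since \(\phi\to1\) on \(E\) and \(\phi\not\equiv0\).

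\emph{Asymptotics of \(\phi\).} On \(E\), standard decay estimates for this equation with \(R_g=O(r^{-2-\tau})\) and \(\tau>(n-2)/2\) give \(\phi=1+br^{2-n}+o_1(r^{2-n})\). Possibly I only need this after an approximation, to handle the weaker \(L^1\) decay of \(R_g\); I will cite the analogue of \cite[Corollary~2.2]{BZ}. Integrating the equation against \(\phi\), with a cutoff toward the non-\(E\) ends, gives
\[
Q_g(\phi)=\lim_{r\to\infty}c_n\int_{S_r}\phi\,\partial_\nu\phi\,d\sigma=-c_n(n-2)|\mathbb S^{n-1}|\,b.
\]
The boundary terms at the other ends vanish because \(\phi\in\eta_E+\mathcal D_0\); this is justified by approximating \(\phi-\eta_E\) with \(C_c^\infty\) functions in \(\mathcal D_0\).

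\emph{Conformal metric and positive mass.} By \eqref{eq:conformal-scalar-curvature}, \(\tilde g:=\phi^{4/(n-2)}g\) is scalar-flat. By \eqref{eq:conformal-mass}, \(m_E(\tilde g)=m_E(g)+2b\). Applying the positive mass theorem for asymptotically flat manifolds with arbitrary ends (\cite{LLU,LUY,Zhu} together with \cite{BW}) to \(\tilde g\) gives \(m_E(g)\ge -2b\). Substituting \(-b=Q_g(\phi)/(c_n(n-2)|\mathbb S^{n-1}|)\) yields the claimed bound, again using \(c_n(n-2)=4(n-1)\).

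\emph{Main obstacle.} The step I expect to be hardest is completeness of \(\tilde g\) on the non-\(E\) ends, where \(\phi\) may decay to \(0\). I will handle it by perturbation, as in the introduction. Take \(\phi_t=(1-t)\phi+t\), so that \(\phi_t\ge t\) and \(\phi_t^{4/(n-2)}g\) is complete. Its scalar curvature is
\[
\phi_t^{-\frac{n+2}{n-2}}\,tR_g\ge0.
\]
Its mass equals \(m_E(g)+2(1-t)b\). Applying the positive mass theorem for each \(t\) and letting \(t\to0\) gives \(m_E(g)\ge-2b\).
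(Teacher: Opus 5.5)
Your overall architecture is the paper's. You minimize the conformal-Laplacian energy, shift by a constant to get completeness, apply the positive mass theorem with arbitrary ends, and read off the mass shift from the flux. Your \(\phi_t=(1-t)\phi+t\) is exactly the paper's \((w+\varepsilon)/(1+\varepsilon)\) with \(t=\varepsilon/(1+\varepsilon)\), and your second inequality is fine.

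\textbf{The asymptotics step fails.} You minimize \(Q_g\) with the full potential \(R_g\) and assert \(\phi=1+br^{2-n}+o_1(r^{2-n})\) from ``standard decay estimates.'' On \(E\) you only know \(\Delta_g(1-\phi)=-R_g\phi/c_n\) with \(R_g=O(r^{-2-\tau})\) and \(R_g\in L^1\). For \(\tau<n-2\) this does not make the remainder \(o(r^{2-n})\). Take a source made of bumps of height \(|x_k|^{-2-\tau}\) and radius \(|x_k|^{\alpha}\), with \(|x_k|=2^k\) and \(\frac{4-n+\tau}{2}<\alpha<\frac{2+\tau}{n}\). It is integrable, but its potential at \(x_k\) is \(\sim|x_k|^{2\alpha-2-\tau}\gg|x_k|^{2-n}\). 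Now let \(g=\psi^{4/(n-2)}g_{\mathrm{Euc}}\) on \(\mathbb R^n\), where \(-\Delta\psi\) is such a source. This \(g\) is admissible, with \(R_g\ge0\), and conformal covariance gives \(\phi=\psi^{-1}\), for which your expansion fails. There is no analogue of \cite[Corollary~2.2]{BZ} here: that result uses harmonicity, where the error source is \(O(r^{-n-\tau})\). Formula \eqref{eq:conformal-mass}, your flux identity, and the asymptotics of \(\phi_t^{4/(n-2)}g\) needed to invoke the positive mass theorem all rest on this expansion, so the argument does not close as written.

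\textbf{The missing idea is the paper's truncation.} For \(\rho\in C_c^\infty(M)\) with \(0\le\rho\le R_g\), minimize \(Q_\rho\) instead of \(Q_g\).
\begin{itemize}
\item The minimizer \(w\) has \(1-w\) harmonic near infinity, so \cite[Lemma~2.1]{BZ} gives the expansion.
\item Testing the Euler--Lagrange equation with \(w-\eta_E\) gives \(Q_\rho(w)=4(n-1)|\mathbb S^{n-1}|a\).
\item The shifted metric has scalar curvature proportional to \((R_g-\rho)w+\varepsilon R_g\ge0\), precisely because \(\rho\le R_g\).
\end{itemize}
This yields \(m_E(g)\ge\inf Q_\rho/(2(n-1)|\mathbb S^{n-1}|)\). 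You then still need \(\inf_{\mathcal A_E}Q_{\rho_j}\to\inf_{\mathcal A_E}Q_g\) for \(\rho_j=\chi_jR_g\uparrow R_g\):
\begin{itemize}
\item The minimizers \(w_j\) are bounded in \(\eta_E+\mathcal D_0(g)\), since \(Q_{\rho_j}(w_j)\le Q_g(\eta_E)\).
\item For a weak limit \(w\), monotonicity \(\rho_k\le\rho_j\) and local compactness give \(Q_{\rho_k}(w)\le\liminf_jQ_{\rho_j}(w_j)\).
\item Letting \(k\to\infty\) gives \(Q_g(w)\le\lim_j\inf Q_{\rho_j}\).
\end{itemize}
Your phrase ``possibly after an approximation'' points in this direction but supplies neither the truncated problem nor this limiting argument.

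\textbf{A smaller point.} \(R_g\) is uncontrolled on the other ends, so approximating \(\phi-\eta_E\) by \(C_c^\infty\) functions in \(\mathcal D_0(g)\) does not control \(\int R_g\phi\varphi\). Either vary along \(\phi+s(\phi-\eta_E)\in\mathcal A_E\), which keeps \(Q_g\) finite, or work with compactly supported \(\rho\) as the paper does, where density suffices.
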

\begin{proof}
Fix \(\rho\in C_c^\infty(M)\) with \(0\le\rho\le R_g\), and set \(Q_\rho(\psi):=\int_M(c_n|\nabla_g\psi|_g^2+\rho\psi^2)d\mu_g\). Writing \(\psi=\eta_E+v\) with \(v\in\mathcal D_0(g)\), we have
\[
Q_\rho(\eta_E+v)\ge\frac{c_n}{2}\|v\|_{\mathcal D_0(g)}^2-c_n\int_M|\nabla_g\eta_E|_g^2d\mu_g.
\]
Thus \(Q_\rho\) has a minimizer \(w\in\mathcal A_E\). By truncation, we may assume \(0\le w\le 1\). Its Euler--Lagrange equation is \(-c_n\Delta_gw+\rho w=0\). 

Since \(\rho\) is compactly supported, \(1-w\) is harmonic outside a compact set, so  \cite[Lemma~2.1]{BZ} gives \(w=1-ar^{2-n}+o_2(r^{2-n})\) on \(E\) for some \(a\in\mathbb R\). With \(w-\eta_E\) as test function, the Euler--Lagrange equation gives
\[
Q_\rho(w)=\int_M\left(c_n\langle\nabla_gw,\nabla_g\eta_E\rangle_g+\rho w\eta_E\right)d\mu_g=c_n\lim_{r\to\infty}\int_{S_r}\partial_{\nu_g}wd\sigma_g=4(n-1)|\mathbb S^{n-1}|a.
\]
For \(\varepsilon>0\), let \(\phi_\varepsilon=(w+\varepsilon)/(1+\varepsilon)\) and \(g_\varepsilon=\phi_\varepsilon^{4/(n-2)}g\). This metric is complete and asymptotically flat. \eqref{eq:conformal-scalar-curvature} and \eqref{eq:conformal-mass} give
\[
R_{g_\varepsilon}=\phi_\varepsilon^{-\frac{n+2}{n-2}}\frac{(R_g-\rho)w+\varepsilon R_g}{1+\varepsilon}\ge 0,\qquad m_E(g_\varepsilon)=m_E(g)-\frac{2a}{1+\varepsilon}.
\]
The positive mass theorem with arbitrary ends applies to \(g_\varepsilon\), so \(m_E(g_\varepsilon)\ge 0\). Letting \(\varepsilon\to 0\), we obtain 
\[
m_E(g)\ge 2a=\frac{1}{2(n-1)|\mathbb S^{n-1}|}\inf_{\mathcal A_E}Q_\rho.
\]
Choose \(\rho_j=\chi_jR_g\), where \(0\le\chi_j\uparrow 1\) and \(\chi_j\in C_c^\infty(M)\), and let \(w_j\) minimize \(Q_{\rho_j}\). Since \(Q_{\rho_j}(w_j)\le Q_g(\eta_E)\), the coercivity bound for \(Q_{\rho_j}(\eta_E+v)\), applied with \(v=w_j-\eta_E\), shows that \(w_j-\eta_E\) is bounded in \(\mathcal D_0(g)\). After passing to a subsequence, \(w_j-\eta_E\rightharpoonup w-\eta_E\) in \(\mathcal D_0(g)\) for some \(w\in\mathcal A_E\). For each fixed \(k\), since \(\rho_k\le\rho_j\) for \(j\ge k\),
\[
Q_{\rho_k}(w)\le\liminf_{j\to\infty}Q_{\rho_k}(w_j)\le\liminf_{j\to\infty}Q_{\rho_j}(w_j)
\]
Letting \(k\to\infty\), we obtain
\[
\inf_{\mathcal A_E}Q_g\le Q_g(w)\le\lim_{j\to\infty}\inf_{\mathcal A_E}Q_{\rho_j}\le\inf_{\mathcal A_E}Q_g.
\]
This proves the first inequality. The second inequality follows immediately from \(R_g\ge 0\) and the definition of \(\mathfrak c_E(g)\).
\end{proof}

The following nonparabolicity estimate will be used to extend the first inequality in Proposition~\ref{prop:conformal-mass-bound} to small compactly supported perturbations of a scalar-flat metric.

\begin{lemma}\label{lem:local-Poincare}
Let \(h\) be a smooth metric on \(M\) that is asymptotically flat at the distinguished end \(E\). For every compact set \(K'\subset M\), there exists \(C_{K'}>0\) such that every \(\varphi\in C_c^\infty(M)\) satisfies
\[
\|\varphi\|_{L^{\frac{2n}{n-2}}(K',h)}\le C_{K'}\|\varphi\|_{\mathcal D_0(h)}.
\]
\end{lemma}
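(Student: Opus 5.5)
The plan is to combine two ingredients. The first is a Poincaré--Sobolev inequality on a fixed bounded connected domain that contains \(K'\) and reaches into the asymptotically flat end. The second is a Hardy-type estimate on \(E\), which controls the mean value of \(\varphi\) on an annulus by the Dirichlet energy. This second estimate is where the nonparabolicity coming from \(E\) (through \(n\ge 3\)) enters.

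Fix an asymptotically flat coordinate system \(x\) on \(E\) with \(E\cong\mathbb R^n\setminus\overline{B_R}\), and choose \(R_1>R\) so large that \(h\) is uniformly equivalent to the Euclidean metric on \(U:=\{r>R_1\}\subset E\). Let \(A:=\{R_1<r<2R_1\}\). Enlarging \(K'\) if necessary, choose a bounded connected domain \(\Omega\subset M\) with smooth boundary such that \(K'\cup A\subset\Omega\). This is possible because \(M\) is connected, so we can join \(K'\) to \(A\) by finitely many paths and smooth out a neighbourhood of their union.

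\emph{Step 1: Poincaré--Sobolev on \(\Omega\).} On a bounded connected Lipschitz domain, the Sobolev embedding \(W^{1,2}(\Omega)\hookrightarrow L^{2n/(n-2)}(\Omega)\) holds. It combines with the Poincaré inequality relative to the mean over a subset of positive measure. Writing \(\varphi_A\) for the \(h\)-average of \(\varphi\) over \(A\), this gives
\[
\|\varphi-\varphi_A\|_{L^{\frac{2n}{n-2}}(\Omega,h)}\le C_\Omega\|\nabla_h\varphi\|_{L^2(\Omega,h)}.
\]
The Poincaré part follows from the usual contradiction argument with Rellich compactness. A normalized sequence with vanishing gradients converges in \(L^2\) to a constant, by connectedness of \(\Omega\). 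That constant has zero mean on \(A\), so it is \(0\), contradicting the normalization. This is the step requiring the most care, but it is standard.

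\emph{Step 2: Hardy-type control of \(\varphi_A\).} For \(\varphi\in C_c^\infty(M)\), work in the coordinates on \(U\). Since \(\varphi\) vanishes for large \(r\), for \(\theta\in\mathbb S^{n-1}\) and \(r\in(R_1,2R_1)\) we have
\[
|\varphi(r\theta)|\le\int_r^\infty|\partial_s\varphi(s\theta)|ds\le\Big(\int_{R_1}^\infty|\partial\varphi|^2s^{n-1}ds\Big)^{1/2}\Big(\int_{R_1}^\infty s^{1-n}ds\Big)^{1/2}.
\]
The last integral is finite because \(n\ge 3\). Squaring and integrating over \(A\) in polar coordinates gives \(\int_A|\varphi|^2dx\le C\int_U|\partial\varphi|^2dx\). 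By uniform equivalence of \(h\) with the Euclidean metric on \(U\), we obtain
\[
|\varphi_A|\le C\|\varphi\|_{L^2(A,h)}\le C\|\nabla_h\varphi\|_{L^2(U,h)}.
\]

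\emph{Step 3: conclusion.} Combining the two steps,
\[
\|\varphi\|_{L^{\frac{2n}{n-2}}(K',h)}\le\|\varphi-\varphi_A\|_{L^{\frac{2n}{n-2}}(\Omega,h)}+|\varphi_A|\,|\Omega|_h^{\frac{n-2}{2n}}\le C_{K'}\|\varphi\|_{\mathcal D_0(h)}.
\]
The constant depends only on \(\Omega\), \(R_1\) and \(h\) restricted to \(\Omega\cup U\), hence only on \(K'\). The only delicate point is Step 1; Steps 2 and 3 are elementary once \(\Omega\) is chosen to be connected and to contain the annulus \(A\).
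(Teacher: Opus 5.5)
Your proposal is correct and follows essentially the same route as the paper. Both arguments combine a radial Hardy-type estimate on an annulus \(A\subset E\), which bounds \(\|\varphi\|_{L^2(A,h)}\) by the Dirichlet energy using \(n\ge 3\), with a Sobolev--Poincar\'e inequality on a connected smooth domain \(\Omega\Subset M\) containing \(K'\cup\overline A\). The only difference is cosmetic: you state the Poincar\'e step relative to the mean \(\varphi_A\), while the paper writes it as \(\|\varphi\|_{L^{2n/(n-2)}(K',h)}\le C_\Omega(\|\nabla_h\varphi\|_{L^2(\Omega,h)}+\|\varphi\|_{L^2(A,h)})\), and the two forms are equivalent.
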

\begin{proof}
    Choose \(R\) sufficiently large that \(A:=\{R<r<2R\}\Subset E\) and \(h\) is uniformly equivalent to the Euclidean metric on \(\{r>R\}\). Since \(\varphi\) is compactly supported,
    \[
    |\varphi(r,\theta)|^2=\left|\int_r^\infty\partial_s\varphi(s,\theta)ds\right|^2\le\frac{r^{2-n}}{n-2}\int_r^\infty|\partial_s\varphi(s,\theta)|^2s^{n-1}ds.
    \]
    Integrating over \(A\) gives \(\|\varphi\|_{L^2(A,h)}\le C\|\varphi\|_{\mathcal D_0(h)}\). Now choose a connected smooth domain \(\Omega\Subset M\) containing \(K'\cup\overline A\). Sobolev-Poincar\'e inequality on \(\Omega\) gives
    \[
    \|\varphi\|_{L^{\frac{2n}{n-2}}(K',h)}\le C_\Omega\left(\|\nabla_h\varphi\|_{L^2(\Omega,h)}+\|\varphi\|_{L^2(A,h)}\right)\le C_{K'}\|\varphi\|_{\mathcal D_0(h)}.
    \]
\end{proof}

\begin{lemma}\label{lem:perturbative-conformal-mass-bound}
Let \((M,g,E)\) be asymptotically flat with arbitrary ends and suppose that \(R_g=0\). Fix a compact set \(K\subset M\). If a metric \(\gamma\) equals \(g\) on \(M\setminus K\) and is sufficiently close to \(g\) in \(C^2(K)\), then 
\[
m_E(\gamma)\ge\frac{1}{2(n-1)|\mathbb S^{n-1}|}\inf_{\psi\in\mathcal A_E}Q_\gamma(\psi).
\]
\end{lemma}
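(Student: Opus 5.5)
The idea is to rerun the argument of Proposition~\ref{prop:conformal-mass-bound} for \(\gamma\). The only new difficulty is that \(R_\gamma\) may now be negative on \(K\). Since \(R_g=0\), the curvature \(R_\gamma\) vanishes outside \(K\). Moreover \(|R_\gamma|\le\delta\) on \(K\), where \(\delta\to0\) as \(\gamma\to g\) in \(C^2(K)\). The conformal factor must therefore be a solution of \(-c_n\Delta_\gamma w+R_\gamma w=0\) that stays positive, and producing such a positive solution is where smallness enters.

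First, I would establish coercivity of \(Q_\gamma\) on \(\mathcal A_E\). Lemma~\ref{lem:local-Poincare}, applied to \(h=\gamma\) and the compact set \(K\), and combined with H\"older's inequality, gives for \(v\in\mathcal D_0(\gamma)\)
\[
\Bigl|\int_K R_\gamma v^2\,d\mu_\gamma\Bigr|\le\|R_\gamma\|_{L^{n/2}(K)}\|v\|^2_{L^{2n/(n-2)}(K)}\le C\delta\|v\|_{\mathcal D_0(\gamma)}^2 .
\]
Since \(\gamma=g\) off \(K\) and \(\gamma\) is \(C^0\)-close to \(g\) on \(K\), the spaces \(\mathcal D_0(\gamma)\) and \(\mathcal D_0(g)\) coincide with uniformly equivalent norms. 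The constant \(C_K\) can therefore be taken uniform, for instance the constant for \(g\) up to a factor \(2\). For \(\delta\) small, \(Q_\gamma(\eta_E+v)\ge\frac{c_n}{4}\|v\|^2-C\) and \(Q_\gamma\) is convex. Hence \(Q_\gamma\) has a unique minimizer \(w\in\mathcal A_E\), which solves \(-c_n\Delta_\gamma w+R_\gamma w=0\). The same estimate shows that \(Q_\gamma\) is positive definite on \(\mathcal D_0(\gamma)\).

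Next comes positivity, which I expect to be the main obstacle, since truncation to \([0,1]\) is no longer free. Positive definiteness gives \(Q_\gamma(|w|)=Q_\gamma(w)\), and \(|w|\in\mathcal A_E\), so uniqueness of the minimizer yields \(w=|w|\ge0\). The strong maximum principle, or Harnack, for the equation then gives \(w>0\), because \(w\to1\) on \(E\) and \(M\) is connected.

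I would also like a uniform bound \(w\le C\), since \(\phi_\varepsilon=(w+\varepsilon)/(1+\varepsilon)\) is only needed to make the metric complete. Here I would use \(\phi=w+\varepsilon\) directly. Then \(R_{\phi^{4/(n-2)}\gamma}=\phi^{-\frac{n+2}{n-2}}\varepsilon R_\gamma\), which is not nonnegative on \(K\). To repair this, I would instead minimize \(Q_{\gamma}\) with the modified potential \(R_\gamma^{-}\) compensated: let \(w_\varepsilon\) solve \(-c_n\Delta_\gamma w_\varepsilon+R_\gamma w_\varepsilon=-\varepsilon R_\gamma^- \) in \(\mathcal A_E\), with the same coercivity. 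Then \(\phi_\varepsilon=w_\varepsilon+\varepsilon\) satisfies \(-c_n\Delta\phi_\varepsilon+R_\gamma\phi_\varepsilon=\varepsilon R_\gamma^+\ge0\). Positivity of \(\phi_\varepsilon\) follows as above from comparison, since the right side is \(\ge0\). In addition \(\phi_\varepsilon\ge\varepsilon\) outside \(K\) by the maximum principle on \(M\setminus K\), where \(\Delta w_\varepsilon=0\) and \(w_\varepsilon\ge0\); this makes \(\phi_\varepsilon^{4/(n-2)}\gamma\) complete with nonnegative scalar curvature. Also \(w_\varepsilon\to w\) in \(\mathcal A_E\) as \(\varepsilon\to0\).

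Finally, I would compute masses as in Proposition~\ref{prop:conformal-mass-bound}. Outside \(K\), \(1-w_\varepsilon\) is harmonic, so \(w_\varepsilon=1-a_\varepsilon r^{2-n}+o_2(r^{2-n})\). Testing the equation with \(w_\varepsilon-\eta_E\) gives \(Q_\gamma(w_\varepsilon)=4(n-1)|\mathbb S^{n-1}|a_\varepsilon+O(\varepsilon)\). The positive mass theorem with arbitrary ends then yields \(m_E(\gamma)\ge2a_\varepsilon\). Letting \(\varepsilon\to0\), \(a_\varepsilon\to a\) with \(4(n-1)|\mathbb S^{n-1}|a=Q_\gamma(w)=\inf Q_\gamma\), since the flux converges by the uniform exterior harmonic expansion. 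This gives the claim.
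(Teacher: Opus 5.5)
Your argument is correct, but it takes a different route from the paper's. The paper keeps the minimizer \(w\) of \(Q_\gamma\) and conformally changes by \(\phi_\varepsilon=(w+\varepsilon)/(1+\varepsilon)\). It accepts that \(R_{\gamma_\varepsilon}=\frac{\varepsilon}{1+\varepsilon}\phi_\varepsilon^{-\frac{n+2}{n-2}}R_\gamma\) has a negative part \(q_\varepsilon\) of size \(O(\varepsilon)\) in \(L^{n/2}\). It then removes that negative part with a second conformal factor \(1+\theta_\varepsilon\), where \(\theta_\varepsilon\in\mathcal D_0(\gamma_\varepsilon)\) solves \((-c_n\Delta_{\gamma_\varepsilon}-q_\varepsilon)\theta_\varepsilon=q_\varepsilon\). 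This needs a second coercivity estimate on \(\mathcal D_0(\gamma_\varepsilon)\) and a separate flux computation showing \(b_\varepsilon=O(\varepsilon)\).

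You build the compensation into one linear equation instead. Writing \(w_\varepsilon=w+\varepsilon z\) with \(z\in\mathcal D_0(\gamma)\) and \((-c_n\Delta_\gamma+R_\gamma)z=\max\{-R_\gamma,0\}\), the factor \(w+\varepsilon(1+z)\) is mapped by \(-c_n\Delta_\gamma+R_\gamma\) to \(\varepsilon\max\{R_\gamma,0\}\ge0\). So the scalar curvature is nonnegative exactly, and \(a_\varepsilon-a=O(\varepsilon)\) is immediate from linearity. Your route uses only the coercivity of \(Q_\gamma\) already needed for the minimizer and avoids a second variational problem. The paper's two-step scheme is the more modular device for absorbing small negative scalar curvature at small mass cost.

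Your positivity argument is also worth keeping: \(|w|\) competes with \(w\), so uniqueness forces \(w\ge0\). The paper's proof uses this implicitly when it calls \(\gamma_\varepsilon\) complete. Truncation at \(1\), used in Proposition~\ref{prop:conformal-mass-bound}, is no longer free once \(R_\gamma\) changes sign. Note that \(|w|\in\mathcal A_E\) relies on the standard lattice property of \(\mathcal D_0\).

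A few details need tidying.
\begin{itemize}
\item Your source term \(-\varepsilon R_\gamma^-\) has the right sign only if \(R_\gamma^-:=\min\{R_\gamma,0\}\). With the usual \(R_\gamma^-=\max\{-R_\gamma,0\}\) it must be \(+\varepsilon R_\gamma^-\).
\item Nonnegativity of \(w_\varepsilon\) on all of \(M\) follows from the same \(|\cdot|\)-and-uniqueness argument applied to \(Q_\gamma(\psi)-2\varepsilon\int_M\max\{-R_\gamma,0\}\psi\,d\mu_\gamma\), whose source is nonnegative. Alternatively, test the equation for \(z\) against \(\max\{-z,0\}\). Either way \(\phi_\varepsilon\ge\varepsilon\) everywhere, which is all completeness needs.
\item The separate maximum principle on \(M\setminus K\) is superfluous; it would not be available on the arbitrary ends without this global bound anyway. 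No upper bound on \(w\) is needed either.
\item \(\phi_\varepsilon=w_\varepsilon+\varepsilon\) tends to \(1+\varepsilon\) on \(E\), so apply the positive mass theorem to \((\phi_\varepsilon/(1+\varepsilon))^{4/(n-2)}\gamma\). This gives \(m_E(\gamma)\ge 2a_\varepsilon/(1+\varepsilon)\) rather than \(2a_\varepsilon\), which makes no difference in the limit \(\varepsilon\to0\).
\end{itemize}
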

\begin{proof}
Since \(R_g=0\) and \(\gamma=g\) outside \(K\), the scalar curvature \(R_\gamma\) is supported in \(K\). Lemma~\ref{lem:local-Poincare} gives \(\|v\|_{L^{\frac{2n}{n-2}}(K,\gamma)}\le C\|v\|_{\mathcal D_0(\gamma)}\) for \(v\in\mathcal D_0(\gamma)\). As \(\gamma\to g\) in \(C^2(K)\), \(\|R_\gamma^-\|_{L^{n/2}(K,\gamma)}=o(1)\), and hence
\begin{equation}\label{eq:perturbative-coercivity}
Q_\gamma(v)\ge(c_n-o(1))\|v\|_{\mathcal D_0(\gamma)}^2.
\end{equation}
Let \(w\in\mathcal A_E\) be the minimizer of \(Q_\gamma\). Its Euler--Lagrange equation is \(-c_n\Delta_\gamma w+R_\gamma w=0\). As in the proof of Proposition~\ref{prop:conformal-mass-bound}, for some \(a\in\mathbb R\),
\[
w=1-ar^{2-n}+o_2(r^{2-n})\quad\text{on }E,\qquad Q_\gamma(w)=4(n-1)|\mathbb S^{n-1}|a.
\]
For \(\varepsilon>0\), let \(\phi_\varepsilon:=(w+\varepsilon)/(1+\varepsilon)\) and \(\gamma_\varepsilon=\phi_\varepsilon^{4/(n-2)}\gamma\). The metric \(\gamma_\varepsilon\) is complete and asymptotically flat. \eqref{eq:conformal-scalar-curvature} and \eqref{eq:conformal-mass} give
\[
R_{\gamma_\varepsilon}=\frac{\varepsilon}{1+\varepsilon}\phi_\varepsilon^{-\frac{n+2}{n-2}}R_\gamma,\qquad m_E(\gamma_\varepsilon)=m_E(\gamma)-\frac{2a}{1+\varepsilon}.
\]
Put \(q_\varepsilon:=\max\{-R_{\gamma_\varepsilon},0\}\). Then \(\operatorname{supp}q_\varepsilon\subset K\) and \(\|q_\varepsilon\|_{L^{n/2}(K,\gamma_\varepsilon)}=O(\varepsilon)\). Then for \(\varphi\in\mathcal D_0(\gamma_\varepsilon)\), we know
\[
\begin{aligned}
    J_\varepsilon(\varphi):=&\int_M\left(c_n|\nabla_{\gamma_\varepsilon}\varphi|_{\gamma_\varepsilon}^2-q_\varepsilon\varphi^2-2q_\varepsilon\varphi\right)d\mu_{\gamma_\varepsilon}\\
    \ge&(c_n-C\varepsilon)\|\varphi\|_{\mathcal D_0(\gamma_\varepsilon)}^2-C\varepsilon\|\varphi\|_{\mathcal D_0(\gamma_\varepsilon)}\\
    \ge&\frac{c_n}{2}\|\varphi\|_{\mathcal D_0(\gamma_\varepsilon)}^2-C\varepsilon^2.
\end{aligned}
\]
So \(J_\varepsilon\) has a unique minimizer \(\theta_\varepsilon\in\mathcal D_0(\gamma_\varepsilon)\), whose Euler--Lagrange equation is
\[
(-c_n\Delta_{\gamma_\varepsilon}-q_\varepsilon)\theta_\varepsilon=q_\varepsilon.
\]
Since \(J_\varepsilon(\theta_\varepsilon)\le J_\varepsilon(0)=0\), the bound above gives \(\|\theta_\varepsilon\|_{\mathcal D_0(\gamma_\varepsilon)}=O(\varepsilon)\). The maximum principle gives \(\theta_\varepsilon\ge 0\).

At \(E\), write \(\theta_\varepsilon=b_\varepsilon r^{2-n}+o_2(r^{2-n})\). With \(\eta_E\) as the test function, the Euler--Lagrange equation gives
\[
-4(n-1)|\mathbb S^{n-1}|b_\varepsilon=c_n\int_M\langle\nabla_{\gamma_\varepsilon}\theta_\varepsilon,\nabla_{\gamma_\varepsilon}\eta_E\rangle_{\gamma_\varepsilon}d\mu_{\gamma_\varepsilon}-\int_Mq_\varepsilon(1+\theta_\varepsilon)\eta_Ed\mu_{\gamma_\varepsilon}.
\]
Both terms on the right are \(O(\varepsilon)\), so \(b_\varepsilon=O(\varepsilon)\).

Finally, let \(\widetilde\gamma_\varepsilon=(1+\theta_\varepsilon)^{4/(n-2)}\gamma_\varepsilon\). This metric is complete and asymptotically flat, with
\[
R_{\widetilde\gamma_\varepsilon}=(1+\theta_\varepsilon)^{-\frac{4}{n-2}}(R_{\gamma_\varepsilon}+q_\varepsilon),\qquad m_E(\widetilde\gamma_\varepsilon)=m_E(\gamma)-\frac{2a}{1+\varepsilon}+2b_\varepsilon.
\]
The positive mass theorem with arbitrary ends gives \(m_E(\widetilde\gamma_\varepsilon)\ge 0\). Letting \(\varepsilon\to 0\), we obtain
\[
m_E(\gamma)\ge 2a=\frac{1}{2(n-1)|\mathbb S^{n-1}|}\inf_{\psi\in\mathcal A_E}Q_\gamma(\psi).
\]
\end{proof}

\subsection{Scalar and Ricci flatness}

\begin{proposition}\label{prop:equality-reduction}
Assume that \((M^n,g,E)\) has nonnegative scalar curvature and \(m_E(g)=2\mathfrak c_E(g)\). Let \(u\) be the minimizer in \eqref{eq:capacity-definition}, and set \(\bar g:=u^{4/(n-2)}g\) and \(V:=u^{-1}\). Then
\[
R_g=0,\qquad m_E(\bar{g})=0,\qquad \operatorname{Ric}_{\bar g}=0,\qquad \Delta_{\bar g}V=0.
\]
\end{proposition}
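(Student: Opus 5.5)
Equality forces both inequalities in Proposition~\ref{prop:conformal-mass-bound} to be equalities. I plan to extract the four conclusions in order: \(R_g=0\), then \(m_E(\bar g)=0\), then \(\operatorname{Ric}_{\bar g}=0\) by a first-variation argument based on Lemma~\ref{lem:perturbative-conformal-mass-bound}, and finally \(\Delta_{\bar g}V=0\) by direct computation.

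\emph{Scalar flatness.} Since \(R_g\ge0\), we have \(\inf_{\mathcal A_E}Q_g\ge c_n(n-2)|\mathbb S^{n-1}|\mathfrak c_E(g)\). Equality at both ends of Proposition~\ref{prop:conformal-mass-bound} forces \(\inf Q_g=c_n\int|\nabla u|^2=Q_g(u)-\int R_gu^2\). The minimizer \(w\) of \(Q_g\), obtained in the proof of Proposition~\ref{prop:conformal-mass-bound}, then satisfies
\[
c_n\int|\nabla u|^2\le c_n\int|\nabla w|^2\le Q_g(w)=\inf Q_g=c_n\int|\nabla u|^2 .
\]
Hence \(\int R_gw^2=0\), and \(w=u\) by uniqueness of the capacity minimizer. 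Because \(u>0\), this gives \(R_g=0\).

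\emph{Mass of \(\bar g\).} Apply \eqref{eq:conformal-mass} with \(\phi=u=1-\mathfrak c_E r^{2-n}+o_2(r^{2-n})\). This gives \(m_E(\bar g)=m_E(g)-2\mathfrak c_E(g)=0\). Moreover, \eqref{eq:conformal-scalar-curvature} gives \(R_{\bar g}=0\), since \(R_g=0\) and \(u\) is harmonic.

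\emph{Ricci flatness (main step).} Take a symmetric 2-tensor \(h\) compactly supported in a compact set \(K\), and set \(\gamma_s=g+sh\). By Lemma~\ref{lem:perturbative-conformal-mass-bound}, for small \(|s|\),
\[
m_E(\gamma_s)\ge \frac{1}{2(n-1)|\mathbb S^{n-1}|}\inf_{\mathcal A_E}Q_{\gamma_s}=:f(s).
\]
The left side equals \(m_E(g)\), since \(\gamma_s=g\) near infinity. At \(s=0\), \(f(0)=\inf Q_g/(2(n-1)|\mathbb S^{n-1}|)=m_E(g)\) by the equality case. Taking \(\psi=u\) as a competitor gives
\[
m_E(g)\ge f(s)\quad\text{and}\quad f(s)\le \frac{Q_{\gamma_s}(u)}{2(n-1)|\mathbb S^{n-1}|}.
\]
This upper bound is the wrong direction for a first-variation argument, so I will instead show that \(f\) is differentiable at \(0\). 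The coercivity \eqref{eq:perturbative-coercivity} gives continuity of the minimizers \(w_s\to u\) in the energy space. A standard envelope argument then gives \(f'(0)=\frac{d}{ds}\big|_{0}Q_{\gamma_s}(u)/(2(n-1)|\mathbb S^{n-1}|)\). Since \(f\) attains an interior maximum at \(s=0\), this derivative vanishes. Expanding it with \(\dot R=-\Delta\operatorname{tr}h+\operatorname{div}\operatorname{div}h-\langle\operatorname{Ric},h\rangle\) and the variation of \(|\nabla u|^2d\mu\), then integrating by parts against \(u^2\), yields
\[
\int\langle c_n(\tfrac12|\nabla u|^2g-du\otimes du)-u^2\operatorname{Ric}_g+\nabla^2(u^2)-\Delta(u^2)g,\;h\rangle\,d\mu_g=0
\]
for all \(h\). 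The tensor in the pairing must therefore vanish. The standard conformal transformation formula for Ricci curvature with \(\phi=u\), \(\Delta u=0\), identifies this tensor (trace-adjusted) with a multiple of \(u^2\operatorname{Ric}_{\bar g}\); checking the constants is routine. This gives \(\operatorname{Ric}_{\bar g}=0\). The delicate point is justifying differentiability of \(f\), including that boundary terms at infinity vanish and that \(u\in\mathcal A_E\) remains admissible. If needed, I would only use one-sided difference quotients: the inequalities \(f(s)\le f(0)\) for \(s>0\) and \(s<0\), combined with lower bounds for \(f(s)\) obtained by evaluating \(Q_{\gamma_s}\) at \(w_s\) and comparing with \(Q_g(w_s)\ge Q_g(u)\), pinch the derivative to zero.

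\emph{Harmonicity of \(V\).} In general,
\[
\Delta_{\phi^{4/(n-2)}g}f=\phi^{-\frac4{n-2}}\Big(\Delta_gf+2\phi^{-1}\langle\nabla\phi,\nabla f\rangle\Big).
\]
With \(\phi=u\) and \(f=u^{-1}\), the bracket is \(2u^{-3}|\nabla u|^2-2u^{-3}|\nabla u|^2=0\), using \(\Delta_gu=0\). Hence \(\Delta_{\bar g}V=0\).
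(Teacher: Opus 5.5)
Your proof is correct and follows the paper's route: scalar flatness from the equality case of Proposition~\ref{prop:conformal-mass-bound}, the conformal mass formula, Lemma~\ref{lem:perturbative-conformal-mass-bound} applied to \(g+sh\) to force \(\frac{d}{ds}\big|_{s=0}Q_{g+sh}(u)=0\), and a direct conformal computation for \(V\). The differences are in packaging. The paper obtains \(Q_{g_s}(u)-\inf Q_{g_s}=O(s^2)\) from the Euler--Lagrange identity \(Q_{g_s}(u_s)=Q_{g_s}(u)-Q_{g_s}(u_s-u)\) rather than from your envelope/pinching argument. It also avoids your tensor computation by writing \(Q_{g_s}(u)-Q_g(u)=\int R_{\hat g_s}d\mu_{\hat g_s}\) with \(\hat g_s=u^{4/(n-2)}g_s\) and using the first variation of the Einstein--Hilbert functional at \(\bar g\); your tensor is in fact exactly \(-u^2\operatorname{Ric}_{\bar g}\), so no trace adjustment is needed.
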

\begin{proof}
By assumption and Proposition~\ref{prop:conformal-mass-bound},
\[
4(n-1)|\mathbb S^{n-1}|\mathfrak c_E(g)\le\inf_{\psi\in\mathcal A_E}Q_g(\psi)\le 2(n-1)|\mathbb S^{n-1}|m_E(g)=4(n-1)|\mathbb S^{n-1}|\mathfrak c_E(g).
\]
Let \(w\) minimize \(Q_g\) on \(\mathcal A_E\). Since \(w-u\in\mathcal D_0(g)\) and \(u\) is harmonic,
\[
0=Q_g(w)-4(n-1)|\mathbb S^{n-1}|\mathfrak c_E(g)=c_n\int_M|\nabla_g(w-u)|_g^2d\mu_g+\int_MR_gw^2d\mu_g.
\]
Thus \(w=u>0\) and \(R_g=0\). Moreover, \eqref{eq:conformal-scalar-curvature} and \eqref{eq:conformal-mass} give
\[
R_{\bar g}=0,\qquad m_E(\bar g)=m_E(g)-2\mathfrak c_E(g)=0.
\]
Applying \eqref{eq:conformal-scalar-curvature} to \(g=V^{4/(n-2)}\bar g\) then gives \(\Delta_{\bar g}V=0\).

To prove that \(\bar g\) is Ricci flat, let \(h\) be a smooth compactly supported symmetric \((0,2)\)-tensor and set \(g_s:=g+sh\). For small \(|s|\), let \(u_s\) minimize \(Q_{g_s}\) on \(\mathcal A_E\). Then
\[
(-c_n\Delta_{g_s}+R_{g_s})(u_s-u)=c_n(\Delta_{g_s}-\Delta_g)u-R_{g_s}u.
\]
The right-hand side is supported in \(K:=\operatorname{supp}h\) and bounded by \(C|s|\). Multiplying the equation by \(u_s-u\), Lemma~\ref{lem:local-Poincare} and \eqref{eq:perturbative-coercivity} give
\[
    c\|u_s-u\|_{\mathcal D_0(g)}^2\le Q_{g_s}(u_s-u)\le C|s|\|u_s-u\|_{\mathcal D_0(g)}.
\]
Hence \(\|u_s-u\|_{\mathcal D_0(g)}=O(|s|)\) and \(Q_{g_s}(u_s-u)=O(s^2)\). By the Euler-Lagrange equation for \(u_s\),
\[
Q_{g_s}(u_s)=Q_{g_s}(u)-Q_{g_s}(u_s-u)=Q_{g_s}(u)+O(s^2).
\]
Since \(g_s=g\) outside \(K\), \(m_E(g_s)=m_E(g)\). Lemma~\ref{lem:perturbative-conformal-mass-bound} gives
\[
Q_{g_s}(u_s)\le 2(n-1)|\mathbb S^{n-1}|m_E(g_s)=2(n-1)|\mathbb S^{n-1}|m_E(g)=Q_g(u).
\]
Together these imply
\[
\frac{d}{ds}\bigg|_{s=0}Q_{g_s}(u)=0.
\]
Set \(\hat g_s:=u^{4/(n-2)}g_s\). By \eqref{eq:conformal-scalar-curvature},
\[
R_{\hat g_s}d\mu_{\hat g_s}=u(-c_n\Delta_{g_s}u+R_{g_s}u)d\mu_{g_s},\qquad R_{\bar g}d\mu_{\bar g}=u(-c_n\Delta_gu+R_gu)d\mu_g.
\]
Subtracting these identities and integrating by parts, the boundary terms cancel because \(g_s=g\) outside \(K\). Hence
\[
\begin{aligned}
    Q_{g_s}(u)-Q_g(u)=&\int_Mu(-c_n\Delta_{g_s}u+R_{g_s}u)d\mu_{g_s}-\int u(-c_n\Delta_gu+R_gu)d\mu_g\\
    =&\int_MR_{\hat g_s}d\mu_{\hat g_s}-\int_MR_{\bar g}d\mu_{\bar g}=\int_MR_{\hat g_s}d\mu_{\hat g_s}.
\end{aligned}
\]
By the first variation of total scalar curvature (Einstein-Hilbert action),
\[
0=\frac{d}{ds}\bigg|_{s=0}Q_{g_s}(u)=\frac{d}{ds}\bigg|_{s=0}\int_MR_{\hat g_s}d\mu_{\hat g_s}=-\int_M\left\langle\operatorname{Ric}_{\bar g},u^{\frac{4}{n-2}}h\right\rangle_{\bar g} d\mu_{\bar g}.
\]
Since \(h\) is arbitrary and \(u>0\), it follows that \(\operatorname{Ric}_{\bar g}=0\).
\end{proof}

\section{Approximation by complete conformal metrics}

\subsection{The approximating metrics}

For the remainder of the paper, we assume equality in the mass--capacity inequality. Proposition~\ref{prop:equality-reduction} gives
\[
\operatorname{Ric}_{\bar g}=0,\qquad m_E(\bar g)=0,\qquad\Delta_{\bar g}V=0,
\]
where \(V=u^{-1}\). Moreover, \(g=V^{4/(n-2)}\bar g\) is complete, although \(\bar g\) need not be complete.

For \(0<t<1\), set
\[
\mathfrak c_t:=t\mathfrak c_E,\qquad V_t:=(1-t)+tV,\qquad g_t:=V_t^{\frac{4}{n-2}}\bar g,\qquad F_t:=2\log V_t.
\]
The function \(V_t\) is \(\bar g\)-harmonic, and \(g_t\to\bar g\) locally smoothly as \(t\to 0\). Moreover,
\[
g_t\ge (tV)^{\frac{4}{n-2}}\bar g= t^{\frac{4}{n-2}}g,
\]
so \(g_t\) is complete. Equation~\eqref{eq:conformal-mass} gives
\begin{equation}\label{eq:mass-gt}
    m_E(g_t)=2\mathfrak c_t=2t\mathfrak c_E.
\end{equation}

For a Riemannian manifold \((M^n,h)\), \(f\in C^\infty(M)\), and \(N\in\mathbb R\setminus\{n\}\), the \(N\)-Bakry--\'Emery Ricci tensor is defined by
\[
\operatorname{Ric}_f^N(h):=\operatorname{Ric}_h+\operatorname{Hess}_hf-\frac{1}{N-n}df\otimes df.
\]

We next record the conformal transformation formula needed below; see \cite[Proposition~4.4]{Case} for a general formula.
\begin{proposition}\label{prop:conformal-weighted-ricci}
Let \((M^n,h)\), \(n\ge 3\), be a Riemannian manifold and let \(v>0\). Set \(\hat h:=v^{4/(n-2)}h\) and \(f:=2\log v\). Then
\[
\operatorname{Ric}_f^{4-n}(\hat h)=\operatorname{Ric}_h-\frac{2v^{-1}\Delta_hv}{n-2}h+\frac{2}{n-2}\left(|\nabla\log v|_h^2h-d\log v\otimes d\log v\right).
\]
In particular, if \(\operatorname{Ric}_h\ge 0\) and \(\Delta_h v=0\), then 
\[
\operatorname{Ric}_f^{4-n}(\hat h)\ge\frac{2}{n-2}\left(|\nabla\log v|_h^2h-d\log v\otimes d\log v\right)\ge 0.
\]
\end{proposition}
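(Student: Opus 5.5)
The formula is a pointwise identity, so I will prove it by direct computation: write the conformal factor exponentially, use the standard conformal transformation laws for the Ricci tensor and the Hessian, and collect terms. Set \(\varphi:=\frac{2}{n-2}\log v\), so that \(\hat h=e^{2\varphi}h\) and \(f=2\log v=(n-2)\varphi\).

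I will use the two classical formulas
\[
\operatorname{Ric}_{\hat h}=\operatorname{Ric}_h-(n-2)\left(\operatorname{Hess}_h\varphi-d\varphi\otimes d\varphi\right)-\left(\Delta_h\varphi+(n-2)|\nabla\varphi|_h^2\right)h
\]
and, for any function \(f\),
\[
\operatorname{Hess}_{\hat h}f=\operatorname{Hess}_hf-df\otimes d\varphi-d\varphi\otimes df+\langle df,d\varphi\rangle_h\,h.
\]
With \(N=4-n\) we have \(N-n=-2(n-2)\). Hence
\[
-\frac{1}{N-n}\,df\otimes df=\frac{1}{2(n-2)}\,df\otimes df=\frac{n-2}{2}\,d\varphi\otimes d\varphi.
\]

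Substituting \(f=(n-2)\varphi\) gives
\[
\operatorname{Hess}_{\hat h}f=(n-2)\operatorname{Hess}_h\varphi-2(n-2)\,d\varphi\otimes d\varphi+(n-2)|\nabla\varphi|_h^2\,h.
\]
Adding this to \(\operatorname{Ric}_{\hat h}\), the \(\operatorname{Hess}_h\varphi\) terms cancel and the \(|\nabla\varphi|^2h\) terms cancel. The \(d\varphi\otimes d\varphi\) terms combine with coefficient
\[
(n-2)-2(n-2)+\tfrac{n-2}{2}=-\tfrac{n-2}{2}.
\]
The outcome is
\[
\operatorname{Ric}_f^{4-n}(\hat h)=\operatorname{Ric}_h-\Delta_h\varphi\,h-\frac{n-2}{2}\,d\varphi\otimes d\varphi.
\]

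Finally, I return to \(v\). We have
\[
\Delta_h\varphi=\frac{2}{n-2}\left(v^{-1}\Delta_hv-|\nabla\log v|_h^2\right),\qquad d\varphi\otimes d\varphi=\frac{4}{(n-2)^2}\,d\log v\otimes d\log v.
\]
Substituting these yields exactly the stated identity.

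For the second claim, if \(\operatorname{Ric}_h\ge0\) and \(\Delta_hv=0\), then only the last bracket survives besides \(\operatorname{Ric}_h\). That bracket is nonnegative because, by Cauchy--Schwarz, for any covector \(\omega\) and vector \(Y\),
\[
|\omega|^2|Y|^2-\omega(Y)^2\ge0 .
\]

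The only real obstacle is bookkeeping of signs and constants. The choice \(N=4-n\) is precisely what makes the \(\operatorname{Hess}\varphi\) and \(|\nabla\varphi|^2h\) terms cancel and leaves a coefficient \(-2/(n-2)\) on \(d\log v\otimes d\log v\). I would double-check this by testing \(n=3\) with a specific \(v\), for example \(v=1/r\) on \(\mathbb R^3\setminus\{0\}\).
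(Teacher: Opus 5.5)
Your proposal is correct and follows the paper's proof essentially verbatim. It uses the same substitution \(\varphi=\frac{2}{n-2}\log v\), the same conformal formulas, the same intermediate identity \(\operatorname{Ric}_f^{4-n}(\hat h)=\operatorname{Ric}_h-(\Delta_h\varphi)h-\frac{n-2}{2}d\varphi\otimes d\varphi\), and the same back-substitution. One small correction to your closing remark: the cancellation of the \(\operatorname{Hess}_h\varphi\) and \(|\nabla\varphi|_h^2h\) terms comes from \(f=(n-2)\varphi\), not from \(N=4-n\); the choice \(N=4-n\) only fixes the \(d\varphi\otimes d\varphi\) coefficient so that the final bracket has the Cauchy--Schwarz form.
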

\begin{proof}
    Let \(\phi:=2\log v/(n-2)\), so that \(\hat h=e^{2\phi}h\) and \(f=(n-2)\phi\). Then
    \[
    \operatorname{Ric}_{\hat h}=\operatorname{Ric}_h-(n-2)\left(\operatorname{Hess}_h\phi-d\phi\otimes d\phi\right)-\left(\Delta_h\phi+(n-2)|\nabla\phi|_h^2\right)h
    \]
    and
    \[
    \begin{aligned}
        \operatorname{Hess}_{\hat h}f=&\operatorname{Hess}_hf-df\otimes d\phi-d\phi\otimes df+\langle\nabla f,\nabla\phi\rangle_hh\\
        =&(n-2)\operatorname{Hess}_h\phi-2(n-2)d\phi\otimes d\phi+(n-2)|\nabla\phi|_h^2h,
    \end{aligned}
    \]
    and
    \[
    \frac{1}{2(n-2)}df\otimes df=\frac{n-2}{2}d\phi\otimes d\phi.
    \]
    Adding together gives
    \[
    \operatorname{Ric}_f^{4-n}(\hat h)=\operatorname{Ric}_h-h\Delta_h\phi-\frac{n-2}{2}d\phi\otimes d\phi.
    \]
    Since 
    \[
    \Delta_h\phi=\frac{2}{n-2}\left(v^{-1}\Delta_hv-|\nabla\log v|_h^2\right),\qquad d\phi=\frac{2}{n-2}d\log v,
    \]
    we obtain
    \[
    \operatorname{Ric}_f^{4-n}(\hat h)=\operatorname{Ric}_h-\frac{2v^{-1}\Delta_hv}{n-2}h+\frac{2}{n-2}\left(|\nabla\log v|_h^2h-d\log v\otimes d\log v\right).
    \]
\end{proof}

Since \(\operatorname{Ric}_{\bar g}=0\) and \(\Delta_{\bar g}V_t=0\), Proposition~\ref{prop:conformal-weighted-ricci} applied with \(h=\bar g\), \(v=V_t\) and \(f=F_t\) gives
\begin{equation}\label{eq:weighted-ricci-nonnegative}
\operatorname{Ric}_{F_t}^{4-n}(g_t)=\frac{1}{2(n-2)}\left(|\bar\nabla F_t|_{\bar g}^2\bar g-dF_t\otimes dF_t\right)\ge 0.
\end{equation}

\subsection{Weighted harmonic coordinates}

In asymptotically flat coordinates \(x\) on \(E\), set \(r:=|x|\) and \(\rho:=\max\{1,r\}\). Fix \(p>n\). For \(k\in\mathbb N_{\ge 0}\) and \(\delta\in\mathbb R\), let \(W_{\delta}^{k,p}(E)\) denote the weighted Sobolev space with norm
\[
\|\varphi\|_{W_\delta^{k,p}(E)}:=\sum_{j=0}^k\left(\int_E|\nabla^j\varphi|^p\rho^{-(\delta-j)p-n}dx\right)^{\frac{1}{p}}.
\]
This space is independent of the choice of asymptotically flat coordinates.

\begin{lemma}\label{lem:harmonic-asymptotics}
After replacing \(E\) by a smaller exterior region, there exist \(R_1>0\), \(q>(n-2)/2\),  \(\sigma>0\) and coordinates
\[
x=(x^1,\ldots,x^n):E\longrightarrow\mathbb R^n\setminus\overline{B_{R_1}}
\]
such that \(\Delta_{\bar g}x^i=0\) for \(i=1,\ldots,n\),
\begin{equation}\label{eq:harmonic-asymptotics}
\bar g_{ij}-\delta_{ij}\in W_{-q}^{2,p}(E),\qquad V=1+\mathfrak c_Er^{2-n}+O_2(r^{2-n-\sigma}),
\end{equation}
where \(\mathfrak c_E:=\mathfrak c_E(g)\). Moreover, \(\bar g_{ij}-\delta_{ij}=O_2(r^{-q})\).
\end{lemma}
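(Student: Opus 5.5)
We construct \(\bar g\)-harmonic coordinates on \(E\) by perturbing the given asymptotically flat coordinates \(y=(y^1,\ldots,y^n)\), and then read off the asymptotics of \(V\) in the new coordinates.

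\textbf{Step 1: asymptotics of \(\bar g\) in \(y\).} In the original coordinates \(g_{ij}-\delta_{ij}=O_2(r^{-\tau})\). Since \(u=1-\mathfrak c_Er^{2-n}+o_2(r^{2-n})\), we have \(u^{4/(n-2)}-1=O_2(r^{2-n})\). Because \(2-n\le-\tau\) fails in general but \(n-2>(n-2)/2\), we get
\[
\bar g_{ij}-\delta_{ij}=O_2(r^{-\tau'}),\qquad \tau':=\min\{\tau,n-2\}>\tfrac{n-2}{2}.
\]
In particular \(\bar g_{ij}-\delta_{ij}\in W^{2,p}_{-\tau''}(E)\) for every \(\tau''<\tau'\). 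Fix \(q\) with \((n-2)/2<q<\tau'\), and also \(q<n-2\) if needed.

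\textbf{Step 2: harmonic coordinates.} Compute
\[
\Delta_{\bar g}y^i=\bar g^{jk}\bar\Gamma^i_{jk}\cdot(-1)=O_1(r^{-\tau'-1}),
\]
which lies in \(W^{0,p}_{-q-2}\). The standard weighted Fredholm theory (Bartnik) gives that \(\Delta_{\bar g}:W^{2,p}_{-q'}\to W^{0,p}_{-q'-2}\) is an isomorphism on a sufficiently far exterior region, after cutting off and extending \(\bar g\) to an asymptotically flat metric on \(\mathbb R^n\), for \(0<q'<n-2\) non-exceptional. Choose \(q'=q-1+\) small, namely a weight \(1-q\in(2-n,0)\), so \(z^i\in W^{2,p}_{1-q}\). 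Solve \(\Delta_{\bar g}z^i=-\Delta_{\bar g}y^i\), and set \(x^i=y^i+z^i\). Since \(p>n\), weighted Sobolev embedding gives \(z^i=O_1(r^{1-q})\). Interior elliptic estimates on annuli, bootstrapped with the smoothness of \(\bar g\), give \(z^i=O_2(r^{1-q})\) and \(\partial^2z\in W^{0,p}_{-q-1}\). Hence \(x\) is a diffeomorphism from a smaller exterior region onto the complement of a ball. Computing \(\bar g(\partial_{x^i},\partial_{x^j})\) via \(dx=dy+dz\) yields \(\bar g_{ij}-\delta_{ij}=O_1(r^{-q})\), with second derivatives of \(\bar g_{ij}\) in the \(W^{2,p}_{-q}\) sense. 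The claimed \(O_2(r^{-q})\) needs one more derivative of \(z\). For this I would use that in harmonic coordinates the equation \(\Delta_{\bar g}\bar g_{ij}=-2\operatorname{Ric}_{ij}+Q(\bar g,\partial\bar g)=Q(\bar g,\partial\bar g)\) holds, since \(\operatorname{Ric}_{\bar g}=0\) by Proposition~\ref{prop:equality-reduction}. Scaled Schauder estimates on dyadic annuli then upgrade the decay to \(O_k(r^{-q})\) for every \(k\).

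\textbf{Step 3: asymptotics of \(V\).} In \(y\)-coordinates, \(V=u^{-1}=1+\mathfrak c_E|y|^{2-n}+o_2(|y|^{2-n})\). Changing to \(x\), we have \(|y|^{2-n}-|x|^{2-n}=O_2(r^{1-n}|z|)=O_2(r^{2-n-q})\). To improve the \(o_2\) remainder to a genuine power, use \(\Delta_{\bar g}V=0\) together with \(\Delta_{\bar g}=\partial_i(\cdot)\)-form in harmonic coordinates with coefficient error \(O(r^{-q})\). Write \(V-1-\mathfrak c_Er^{2-n}=:w\). Then
\[
\Delta_{\mathrm{Euc}}w=O(r^{-q-n}),
\]
and the expansion of decaying Euclidean-harmonic-plus-source functions gives \(w=O_2(r^{2-n-\sigma})\) for any \(\sigma<\min\{q,1\}\). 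No new \(r^{2-n}\) term appears, because the coefficient of \(r^{2-n}\) is the flux, which equals \(\mathfrak c_E\).

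The main obstacle is Step 2's regularity and weight bookkeeping: choosing a nonexceptional weight, ensuring \(q>(n-2)/2\) survives, and obtaining the \(O_2\) (rather than \(O_1\)) decay. This is where the Ricci-flat harmonic-coordinate equation is essential.
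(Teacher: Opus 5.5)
Your proposal is correct and follows essentially the paper's argument. The steps match: \(\bar g_{ij}-\delta_{ij}=O_2(r^{-\min\{\tau,n-2\}})\); harmonic coordinates from Bartnik's weighted theory (the paper simply cites \cite[Proposition~3.3]{Bartnik}); and the Ricci-flat equation \(\bar g^{ab}\partial_a\partial_b\bar g_{ij}=\bar g^{-1}*\bar g^{-1}*\partial\bar g*\partial\bar g\) in harmonic coordinates to upgrade to \(O_2(r^{-q})\). The one difference is the expansion of \(V\): you derive it by hand from \(\Delta_{\mathrm{Euc}}w=O(r^{-n-q})\) and \(w=o(r^{2-n})\), while the paper just re-applies \cite[Lemma~2.1]{BZ} in the new coordinates. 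Your weight bookkeeping needs tidying in three places: \(\Delta_{\bar g}y^i=O_1(r^{-\tau'-1})\) lies in \(W^{0,p}_{-q-1}\) (not \(W^{0,p}_{-q-2}\)), which is exactly what solving for \(z^i\in W^{2,p}_{1-q}\) requires; for \(n=3\) one has \(q<1\), so \(1-q>0\) and \(\Delta_{\bar g}\) is only surjective rather than an isomorphism, which still suffices; and since \(O_2(r^{-q})\) only gives \(W^{2,p}_{-q'}\) for \(q'<q\), you should either get \(z^i\in W^{3,p}_{1-q}\) from weighted elliptic regularity or shrink \(q\) slightly at the end, keeping \(q>(n-2)/2\).
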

\begin{proof}
By definition, there exist \(R_0>0\) and coordinates
\[
x_0=(x_0^1,\ldots,x_0^n):E\to\mathbb R^n\setminus\overline{B_{R_0}}
\]
in which \(g_{ij}-\delta_{ij}=O_2(r_0^{-\tau})\), where \(r_0:=|x_0|\). Applying \cite[Lemma~2.1]{BZ}, we obtain \(u=1-ar_0^{2-n}+O_2(r_0^{2-n-\sigma})\) for some \(a\in\mathbb R\), \(\sigma>0\). As in the proof of Proposition~\ref{prop:conformal-mass-bound}, \(a=\mathfrak c_E\). 
Set \(\tau_0:=\min\{\tau, n-2\}\). Note that \(u^{4/(n-2)}-1=O_2(r_0^{2-n})\) and \(g_{ij}-\delta_{ij}=O_2(r_0^{-\tau_0})\), we have
\[
\bar g_{ij}-\delta_{ij}=O_2(r_0^{-\tau_0}).
\]
Choose \((n-2)/2<q<\tau_0\). Thus, in the \(x_0\)-coordinates, \(\bar g_{ij}-\delta_{ij}\in W_{-q}^{2,p}(E)\). 

Extend \(\bar g\) from \(\mathbb R^n\setminus\overline{B_{R_0}}\) to a smooth metric on \(\mathbb R^n\). By \cite[Proposition~3.3]{Bartnik} and shrinking \(E\) if necessary, we obtain coordinates
\[
x:E\longrightarrow\mathbb R^n\setminus\overline{B_{R_1}}
\]
such that \(\Delta_{\bar g}x^i=0\) and in which \(\bar g_{ij}-\delta_{ij}\in W_{-q}^{2,p}(E)\). Moreover, after a rotation, we may assume that \(x-x_0=o(r_0)\). Since \(x\) is harmonic, the equation \(\operatorname{Ric}_{\bar g}=0\) takes the form
\[
\bar g^{ab}\partial_a\partial_b\bar g_{ij}=\bar g^{-1}*\bar g^{-1}*\partial\bar g*\partial\bar g.
\]
Standard elliptic estimates give \(\bar g_{ij}-\delta_{ij}=O_2(r^{-q})\). Finally, applying \cite[Lemma~2.1]{BZ}  as before gives \(u=1-\mathfrak c_Er^{2-n}+O_2(r^{2-n-\sigma})\). Thus
\[
V=u^{-1}=1+\mathfrak c_Er^{2-n}+O_2(r^{2-n-\sigma}).
\]
\end{proof}

For a Riemannian metric \(h\) and smooth functions \(H,U\), write
\[
\Delta_{h,H}U:=\Delta_hU-\langle\nabla_hH,\nabla_hU\rangle_h.
\]
Set \(T_t:=F_t/(n-1)=2\log V_t/(n-1)\) and 
\[
\mathcal L_tU:=\Delta_{\bar g,T_t}U=\Delta_{\bar g}U-\langle\bar\nabla T_t,\bar\nabla U\rangle_{\bar g}.
\]
Lemma~\ref{lem:harmonic-asymptotics} gives, uniformly for \(0<t<1\),
\begin{equation}\label{eq:Tt-uniform-asymptotic}
    T_t=\frac{2\mathfrak c_t}{n-1}r^{2-n}+O_2(tr^{2-n-\sigma}).
\end{equation}
Set \(d\mathfrak m_t:=e^{-F_t}d\mu_{g_t}\). Since \(g_t=V_t^{4/(n-2)}\bar g\) and \(F_t=2\log V_t\),
\begin{equation}\label{eq:measure-cancellation}
d\mathfrak m_t=V_t^{\frac{4}{n-2}}d\mu_{\bar g},\qquad |\nabla_{g_t}U|_{g_t}^2d\mathfrak m_t=|\bar\nabla U|_{\bar g}^2d\mu_{\bar g}.
\end{equation}
Consequently, 
\[
\Delta_{g_t,F_t}U=V_t^{-\frac{4}{n-2}}\Delta_{\bar g}U,\qquad \Delta_{g_t,F_t+T_t}U=V_t^{-\frac{4}{n-2}}\mathcal L_tU.
\]
Choose \(\delta\in(2-n,3-n)\) such that \(\delta-2>1-n-\sigma\). For \(R\ge R_1\), set \(E_R:=\{p\in E:r(p)>R\}\).

\begin{proposition}\label{prop:drift-coordinates}
Assume \(n\ge 4 \). There exist \(R,C>0\) such that, for every \(0<t<1\), there are smooth functions \(X_t^1,\ldots,X_t^n\) on \(M\) satisfying
\begin{equation}\label{eq:X-drift}
\Delta_{g_t,F_t+T_t}X_t^i=0.
\end{equation}
On \(E_R\),
\begin{equation}\label{eq:X-asymptotic}
\left\|X_t^i-x^i-\frac{\mathfrak c_t}{n-1}x^ir^{2-n}\right\|_{W_\delta^{3,p}(E_R)}\le C.
\end{equation}
Moreover, \(X_t|_{E_R}\) is a coordinate chart, and
\begin{equation}\label{eq:finite-energy-arbitrary-ends}
\int_{M\setminus E_{2R}}|\nabla_{g_t}X_t^i|_{g_t}^2d\mathfrak m_t<\infty.
\end{equation}
\end{proposition}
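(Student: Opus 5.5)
The plan is to build \(X_t^i\) as an explicit approximate solution on \(E\) plus a finite-energy correction obtained variationally on all of \(M\). The first step is to rewrite \eqref{eq:X-drift} in terms of \(\bar g\): by \eqref{eq:measure-cancellation} it is equivalent to \(\mathcal L_tX_t^i=0\). This operator is the drift Laplacian of the complete metric \(g_t\) with respect to the measure \(e^{-T_t}d\mathfrak m_t\), and its Dirichlet energy is
\[
\int_M e^{-T_t}|\nabla_{g_t}U|_{g_t}^2d\mathfrak m_t=\int_M e^{-T_t}|\bar\nabla U|_{\bar g}^2d\mu_{\bar g}.
\]
Since \(V_t\ge 1-t>0\), the weight satisfies \(0<e^{-T_t}\le (1-t)^{-2/(n-1)}\). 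On \(E\), Lemma~\ref{lem:harmonic-asymptotics} makes \(V_t\) uniformly close to \(1\), so there \(e^{-T_t}\) is uniformly comparable to \(1\) for \(0<t<1\). Away from \(E\) no lower bound is needed, because no forcing will live there.

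Next we construct the approximate solution. Let \(\chi\) be a cutoff equal to \(1\) on \(E_{2R}\) and supported in \(E_R\), and set \(Y_t^i:=\chi\,(x^i+\tfrac{\mathfrak c_t}{n-1}x^ir^{2-n})\). We have \(\Delta_{\bar g}x^i=0\) exactly. By \eqref{eq:Tt-uniform-asymptotic},
\[
-\langle\bar\nabla T_t,\bar\nabla x^i\rangle=\tfrac{2(n-2)\mathfrak c_t}{n-1}x^ir^{-n}+O(t r^{1-n-\sigma})+O(tr^{1-n-q}),
\]
while the Euclidean computation gives \(\Delta(x^ir^{2-n})=2(2-n)x^ir^{-n}\). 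So the leading \(r^{1-n}\) terms cancel, and on \(E_{2R}\) the remainder is
\[
f_t:=-\mathcal L_tY_t^i=O(t r^{1-n-\sigma})+O(r^{1-n-q})+\text{(quadratic terms)}.
\]
We use that \(\bar g_{ij}-\delta_{ij}=O_2(r^{-q})\). Together with the choice \(\delta-2>1-n-\sigma\) and \(q>(n-2)/2\ge 1\) (which holds because \(n\ge4\)), this gives \(\|f_t\|_{W^{1,p}_{\delta-2}(E_R)}\le C\) uniformly in \(t\) (taking \(\sigma\le q\)). Moreover, \(f_t\) is supported in \(E_R\), with a bounded contribution from \(\{R<r<2R\}\).

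Then we find the correction \(w_t\) with \(\mathcal L_tw_t=f_t\) by minimizing
\[
\int_M e^{-T_t}|\bar\nabla w|^2d\mu_{\bar g}-2\int_M f_t\,w\,e^{-T_t}d\mu_{\bar g}
\]
over the completion of \(C^\infty_c(M)\) in the weighted energy norm. Coercivity comes from a Hardy inequality on \(E\), exactly as in the proof of Lemma~\ref{lem:local-Poincare}:
\[
\Bigl|\int f_tw\Bigr|\le \|rf_t\|_{L^2(E_R)}\|w/r\|_{L^2(E_R)}\le C\|w\|_{\mathcal D_0},
\]
where \(\|rf_t\|_{L^2}\) is finite because \(1-n-\sigma+1<-n/2\), and the Hardy inequality uses only that the weight is uniformly comparable to \(1\) on \(E\). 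This yields a unique minimizer with energy bounded uniformly in \(t\). Elliptic regularity makes \(X_t^i:=Y_t^i+w_t\) smooth. Since \(Y_t^i\) vanishes outside \(E_R\) and is bounded in \(C^1\) on \(E_R\setminus E_{2R}\), the finite-energy statement \eqref{eq:finite-energy-arbitrary-ends} follows.

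The step I expect to be the main obstacle is upgrading the energy bound on \(w_t\) to the uniform bound \(\|w_t\|_{W^{3,p}_\delta(E_R)}\le C\). On \(E_R\) the operator \(\mathcal L_t\) is a perturbation of the flat Laplacian whose coefficients are uniformly controlled: \(\bar g-\delta=O_2(r^{-q})\) and \(\bar\nabla T_t=O_1(r^{1-n})\). I would proceed as follows. First, the finite energy together with local \(L^p\) estimates puts \(w_t\) in \(W^{3,p}_{\delta'}\) for some \(\delta'>2-n\) near \((2-n)/2\). Then Bartnik's weighted estimates for the Laplacian on exterior domains give the expansion \(w_t=A_tr^{2-n}+W^{3,p}_{\delta''}\) with \(\delta''<\delta\), where \(A_t\) is bounded by the energy and the norm of \(f_t\), and \(\delta\in(2-n,3-n)\) avoids the exceptional weights. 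Since \(r^{2-n}\in W_\delta^{3,p}\) for \(\delta>2-n\), this gives \eqref{eq:X-asymptotic} with \(C\) independent of \(t\). Finally, because \(p>n\), the weighted Sobolev embedding gives \(|\partial(X_t-x)|\le Cr^{\delta-1}+C r^{2-n}\), and these bounds are uniform in \(t\). So after enlarging \(R\) once and for all, \(\partial X_t\) is uniformly close to the identity on \(E_R\), and \(X_t|_{E_R}\) is an injective local diffeomorphism, i.e.\ a coordinate chart.
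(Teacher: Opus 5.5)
\textbf{There is a genuine gap in your proof of the finite-energy claim} \eqref{eq:finite-energy-arbitrary-ends}.

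You pose the correction problem in the completion of \(C_c^\infty(M)\) for the \emph{weighted} energy \(\int_M e^{-T_t}|\bar\nabla w|_{\bar g}^2\,d\mu_{\bar g}\). So what you control is \(\int_M e^{-T_t}|\nabla_{g_t}X_t^i|_{g_t}^2\,d\mathfrak m_t\). The statement asks for \(\int_{M\setminus E_{2R}}|\nabla_{g_t}X_t^i|_{g_t}^2\,d\mathfrak m_t=\int_{M\setminus E_{2R}}|\bar\nabla X_t^i|_{\bar g}^2\,d\mu_{\bar g}\), without the factor \(e^{-T_t}\). This is exactly the quantity that enters the cutoff error in Proposition~\ref{prop:mass-controls-hessian}.

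Since \(0<u\le 1\), you have \(V\ge 1\), hence \(e^{-T_t}=V_t^{-2/(n-1)}\le 1\). So your weighted energy is the \emph{weaker} one. Moreover, \(e^{-T_t}\) has no positive lower bound off \(E\):
\begin{itemize}
\item it tends to \(0\) wherever \(u\to 0\), e.g.\ on any further asymptotically flat end;
\item more generally, \(V\) must be unbounded whenever \(\bar g\) is incomplete, since a bound on \(V\) together with completeness of \(g=V^{4/(n-2)}\bar g\) would make \(\bar g\) complete.
\end{itemize}
Your remark that away from \(E\) no lower bound is needed is correct for existence of the minimizer. It is precisely what breaks the deduction of \eqref{eq:finite-energy-arbitrary-ends}.

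\textbf{The missing idea} is to solve in the unweighted space \(\mathcal D_0(\bar g)\) with the non-symmetric form
\(\mathfrak b_t(U,\varphi)=\int_M\langle\bar\nabla U,\bar\nabla\varphi+\varphi\bar\nabla T_t\rangle_{\bar g}\,d\mu_{\bar g}=-\int_M\varphi\,\mathcal L_tU\,d\mu_{\bar g}\).
Because \(V_t\) is \(\bar g\)-harmonic, \(\Delta_{\bar g}T_t=-\frac{n-1}{2}|\bar\nabla T_t|_{\bar g}^2\). Two consequences follow, both uniform in \(t\):
\begin{itemize}
\item Testing with \(\psi^2\) gives the Hardy-type bound \(\|\psi\bar\nabla T_t\|_{L^2}\le\frac{4}{n-1}\|\bar\nabla\psi\|_{L^2}\), so \(\mathfrak b_t\) is bounded on \(\mathcal D_0(\bar g)\).
\item The identity \(\mathfrak b_t(U,U)=\|U\|_{\mathcal D_0(\bar g)}^2+\frac{n-1}{4}\int_M U^2|\bar\nabla T_t|^2\,d\mu_{\bar g}\) gives coercivity.
\end{itemize}
Lax--Milgram then produces a correction with \(\|w_t\|_{\mathcal D_0(\bar g)}\le C\), and this yields \eqref{eq:finite-energy-arbitrary-ends}.

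\textbf{The rest of your plan survives this change.}
\begin{itemize}
\item Your Hardy inequality on \(E\) still bounds the non-compactly supported forcing \(\varphi\mapsto\int\varphi f_t\,d\mu_{\bar g}\) on \(\mathcal D_0(\bar g)\). So you may skip the paper's preliminary step, which solves on an extension to \(\mathbb R^n\) so that the remaining forcing is compactly supported.
\item Your upgrade to a uniform \(W^{3,p}_\delta(E_R)\) bound is fine. No \(A_tr^{2-n}\) term is actually needed, since \([\delta,\delta']\subset(2-n,0)\) contains no exceptional weight.
\end{itemize}

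\textbf{Minor points.}
\begin{itemize}
\item The functional as written has Euler--Lagrange equation \(-\mathcal L_tw=f_t\), so the sign of its linear term is off.
\item Injectivity of \(X_t\) on the non-convex set \(E_R\) needs the \(C^0\) bound \(|X_t-x|\le Cr^{3-n}\) in addition to the gradient bound.
\end{itemize}
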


\begin{proof}
\noindent\emph{Step 1.} Let \(R_2>R_1\) to be fixed. Choose smooth extensions \(\tilde g\) and \(\widetilde T_t\) to \(\mathbb R^n\) such that
\[
(\tilde g,\widetilde T_t)=(g_{\mathrm{Euc}},0)\quad\text{on }B_{R_2},\qquad (\tilde g,\widetilde T_t)=(\bar g, T_t)\quad\text{on }\mathbb R^n\setminus B_{2R_2},
\]
and set 
\[
\widetilde{\mathcal L}_t U:=\Delta_{\tilde g}U-\left\langle\nabla_{\tilde g}\widetilde T_t,\nabla_{\tilde g}U\right\rangle_{\tilde g}.
\]
By \eqref{eq:harmonic-asymptotics} and \eqref{eq:Tt-uniform-asymptotic}, 
\[
\|(\widetilde{\mathcal L}_t-\Delta_{\mathbb R^n})Z\|_{W_{\delta-2}^{k,p}(\mathbb R^n)}\le \varepsilon(R_2)\|Z\|_{W_\delta^{k+2,p}(\mathbb R^n)}\quad (k=0,1),\qquad \|\nabla_{\tilde g}\widetilde T_t\|_{L^n(\mathbb R^n,\tilde g)}\le\varepsilon(R_2),
\]
where \(\varepsilon(R_2)\to 0\) as \(R_2\to\infty\), uniformly for \(0<t<1\).

Since \(\delta\in (2-n,0)\), \cite[Propositions~1.6 and~2.2]{Bartnik} gives
\[
\Delta_{\mathbb R^n}: W_\delta^{k+2,p}(\mathbb R^n)\longrightarrow W_{\delta-2}^{k,p}(\mathbb R^n)
\]
as an isomorphism for \(k=0,1\). For \(R_2\) sufficiently large, \(\widetilde{\mathcal L}_t\) is therefore an isomorphism, with
\begin{equation}\label{eq:global-weighted-inverse}
\|Z\|_{W_\delta^{k+2,p}(\mathbb R^n)}\le C\|\widetilde{\mathcal L}_tZ\|_{W_{\delta-2}^{k,p}(\mathbb R^n)}\qquad (k=0,1).
\end{equation}
Moreover, after increasing  \(R_2\) if necessary, for every \(U\in\mathcal D_0(\tilde g)\),
\begin{equation}\label{eq:extended-coercivity}
\int_{\mathbb R^n}\left\langle\nabla_{\tilde g}U,\nabla_{\tilde g}U+U\nabla_{\tilde g}\widetilde T_t\right\rangle_{\tilde g}d\mu_{\tilde g}\ge\frac{1}{2}\|U\|_{\mathcal D_0(\tilde g)}^2.
\end{equation}
Estimate \eqref{eq:global-weighted-inverse} also holds for solutions in \(\mathcal D_0(\tilde g)\). Let \(U\in\mathcal D_0(\tilde g)\) and \(f\in W_{\delta-2}^{1,p}(\mathbb R^n)\) satisfy \(\widetilde{\mathcal L}_tU=f\), and set
\[
Z_f:=(\widetilde{\mathcal L}_t)^{-1}f\in W_\delta^{3,p}(\mathbb R^n).
\]
Since \(n\ge 4\) and \(\delta<3-n\), we have \(\delta<(2-n)/2\). The weighted Sobolev inequality \cite[Theorem~1.2(iv)]{Bartnik} gives
\[
|Z_f|+r|\nabla Z_f|\le Cr^\delta\|f\|_{W_{\delta-2}^{1,p}(\mathbb R^n)}.
\]
Let \(\eta_s=1\) on \(B_s\), \(\eta_s=0\) on \(\mathbb R^n\setminus B_{2s}\) and \(|D\eta_s|\le C/s\). Then
\[
\begin{aligned}
   \|Z_f-\eta_sZ_f\|_{\mathcal D_0(\tilde g)}^2&\le C\int_{r>s}|\nabla Z_f|^2dx+\frac{C}{s^2}\int_{s<r<2s}|Z_f|^2dx\\
&\le Cs^{2\delta+n-2}\|f\|_{W_{\delta-2}^{1,p}(\mathbb R^n)}^2. 
\end{aligned}
\]
Thus \(Z_f\in\mathcal D_0(\tilde g)\). Since \(\widetilde{\mathcal L}_t(U-Z_f)=0\), \eqref{eq:extended-coercivity} gives \(U=Z_f\). Consequently,
\begin{equation}\label{eq:D0-weighted-estimate}
U\in W_\delta^{3,p}(\mathbb R^n),\qquad \|U\|_{W_\delta^{3,p}(\mathbb R^n)}\le C\|f\|_{W_{\delta-2}^{1,p}(\mathbb R^n)}.
\end{equation}

\noindent\emph{Step 2.} Since \(\Delta_{\bar g}x^i=0\), we have \(\mathcal L_tx^i=-\bar g^{ai}\partial_aT_t\). Hence
\[
\mathcal L_tx^i=\frac{2(n-2)}{n-1}\mathfrak c_tx^ir^{-n}+O_1(tr^{1-n-\sigma}).
\]
On the other hand, \(\Delta_{\mathbb R^n}(x^ir^{2-n})=-2(n-2)x^ir^{-n}\) gives
\[
\mathcal L_t(x^ir^{2-n})=-2(n-2)x^ir^{-n}+O_1(r^{1-n-\sigma}).
\]
Thus \(\delta-2>1-n-\sigma\) implies
\[
\left\|\mathcal L_t\left(x^i+\frac{\mathfrak c_t}{n-1}x^ir^{2-n}\right)\right\|_{W_{\delta-2}^{1,p}(E_{2R_2})}\le Ct.
\]
Extend this function to \(\tilde f_t^i\in W_{\delta-2}^{1,p}(\mathbb R^n)\) with \(\|\tilde f_t^i\|_{W_{\delta-2}^{1,p}(\mathbb R^n)}\le Ct\). By \eqref{eq:global-weighted-inverse}, there exists \(Z_t^i\in W_\delta^{3,p}(\mathbb R^n)\) satisfying
\[
\widetilde{\mathcal L}_tZ_t^i=-\tilde f_t^i,\qquad \|Z_t^i\|_{W_\delta^{3,p}(\mathbb R^n)}\le Ct.
\]
Hence
\[
Y_t^i:=x^i+\frac{\mathfrak c_t}{n-1}x^ir^{2-n}+Z_t^i
\]
is \(\mathcal L_t\)-harmonic on \(E_{2R_2}\) and 
\begin{equation}\label{eq:Yt-asymptotic}
\left\|Y_t^i-x^i-\frac{\mathfrak c_t}{n-1}x^ir^{2-n}\right\|_{W_\delta^{3,p}(E_{2R_2})}\le Ct.
\end{equation}
\noindent\emph{Step 3.} Since \(V_t\) is \(\bar g\)-harmonic and \(T_t=2\log V_t/(n-1)\),
\[
\Delta_{\bar g}T_t=-\frac{n-1}{2}|\bar\nabla T_t|_{\bar g}^2.
\]
For \(\psi\in C_c^\infty(M)\), multiplying by \(\psi^2\) and integrating by parts gives
\[
\|\psi\bar\nabla T_t\|_{L^2(M,\bar g)}\le \frac{4}{n-1}\|\bar\nabla\psi\|_{L^2(M,\bar g)}.
\]
For \(U,\varphi\in C_c^\infty(M)\), set
\[
\mathfrak b_t(U,\varphi):=\int_M\left\langle\bar\nabla U,\bar\nabla\varphi+\varphi\bar\nabla T_t\right\rangle_{\bar g}d\mu_{\bar g}.
\]
Then 
\[
\begin{aligned}
    |\mathfrak b_t(U,\varphi)|&\le\|\bar\nabla U\|_{L^2(M,\bar g)}\left(\|\bar\nabla\varphi\|_{L^2(M,\bar g)}+\|\varphi\bar\nabla T_t\|_{L^2(M,\bar g)}\right)\\
    &\le\left(1+\frac{4}{n-1}\right)\|U\|_{\mathcal D_0(\bar g)}\|\varphi\|_{\mathcal D_0(\bar g)},
\end{aligned}
\]
while
\[
\begin{aligned}
   \mathfrak b_t(U,U)&=\|U\|_{\mathcal D_0(\bar g)}^2-\frac{1}{2}\int_MU^2\Delta_{\bar g}T_td\mu_{\bar g}\\
&=\|U\|_{\mathcal D_0(\bar g)}^2+\frac{n-1}{4}\int_MU^2|\bar\nabla T_t|_{\bar g}^2d\mu_{\bar g}\ge\|U\|_{\mathcal D_0(\bar g)}^2. 
\end{aligned}
\]
Choose \(\beta\in C^\infty(M)\) with \(\operatorname{supp}\beta\subset E_{2R_2}\) and \(\beta=1\) on \(E_{3R_2}\). Then \(\mathcal L_t(\beta Y_t^i)\) is supported in \(\overline{E_{2R_2}\setminus E_{3R_2}}\) and \eqref{eq:Yt-asymptotic} gives
\[
\|\mathcal L_t(\beta Y_t^i)\|_{W^{1,p}(M,\bar g)}\le C\|Y_t^i\|_{W^{2,p}(E_{2R_2}\setminus E_{3R_2},\bar g)}\le C.
\]

 By Lemma~\ref{lem:local-Poincare},
\[
\left|\int_M\varphi\mathcal L_t(\beta Y_t^i)d\mu_{\bar g}\right|\le C\|\varphi\|_{L^{\frac{2n}{n-2}}(E_{2R_2}\setminus E_{3R_2},\bar g)}\le C\|\varphi\|_{\mathcal D_0(\bar g)}.
\]
The Lax--Milgram theorem therefore gives \(\xi_t^i\in\mathcal D_0(\bar g)\) satisfying
\[
\mathfrak b_t(\xi_t^i,\varphi)=\int_M\varphi\mathcal L_t(\beta Y_t^i)d\mu_{\bar g}\qquad (\varphi\in\mathcal D_0(\bar g)),
\]
with \(\|\xi_t^i\|_{\mathcal D_0(\bar g)}\le C\). Since \(\mathfrak b_t(U,\varphi)=-\int_M\varphi\mathcal L_tU d\mu_{\bar g}\), we know
\[
\mathcal L_t\xi_t^i=-\mathcal L_t(\beta Y_t^i).
\]
Interior elliptic estimates on \(E_{2R_2}\setminus E_{3R_2}\), together with Lemma~\ref{lem:local-Poincare}, give
\[
\|\xi_t^i\|_{W^{2,p}(E_{2R_2}\setminus E_{3R_2},\bar g)}\le C\left(\|\xi_t^i\|_{\mathcal D_0(\bar g)}+\|\mathcal L_t(\beta Y_t^i)\|_{W^{1,p}(M,\bar g)}\right)\le C.
\]
Extending \(\beta\xi_t^i\) by zero to \(\mathbb R^n\), we obtain \(\widetilde{\mathcal L}_t(\beta\xi_t^i)=[\mathcal L_t,\beta]\xi_t^i-\beta\mathcal L_t(\beta Y_t^i)\), and hence
\[
\|\widetilde{\mathcal L}_t(\beta\xi_t^i)\|_{W_{\delta-2}^{1,p}(\mathbb R^n)}\le C\|\xi_t^i\|_{W^{2,p}(E_{2R_2}\setminus E_{3R_2},\bar g)}+C\|\mathcal L_t(\beta Y_t^i)\|_{W^{1,p}(M,\bar g)}\le C.
\]
Since \(\beta\xi_t^i\in\mathcal D_0(\tilde g)\) and \(\beta=1\) on \(E_{3R_2}\), \eqref{eq:D0-weighted-estimate} gives
\[
\|\xi_t^i\|_{W_\delta^{3,p}(E_{3R_2})}\le\|\beta\xi_t^i\|_{W_\delta^{3,p}(\mathbb R^n)}\le C.
\]

Set \(X_t^i:=\beta Y_t^i+\xi_t^i\). Since \(\mathcal L_t\xi_t^i=-\mathcal L_t(\beta Y_t^i)\), we have \(\mathcal L_tX_t^i=0\) on \(M\) and hence \eqref{eq:X-drift}. Note that \(X_t^i=Y_t^i+\xi_t^i\) on \(E_{3R_2}\). Thus \eqref{eq:Yt-asymptotic} and \(\|\xi_t^i\|_{W_\delta^{3,p}(E_{3R_2})}\le C\) prove \eqref{eq:X-asymptotic} for every \(R\ge 3R_2\).

Moreover, for \(R\ge R_2\), 
\[
\int_{M\setminus E_{2R}}|\bar\nabla X_t^i|_{\bar g}^2d\mu_{\bar g}\le 2\int_{M\setminus E_{2R}}|\bar\nabla(\beta Y_t^i)|_{\bar g}^2d\mu_{\bar g}+2\|\xi_t^i\|_{\mathcal D_0(\bar g)}^2<\infty.
\]
Together with \eqref{eq:measure-cancellation}, this proves \eqref{eq:finite-energy-arbitrary-ends}.

\noindent\emph{Step 4.} Write \(X_t=x+\zeta_t\) on \(E_{3R_2}\). By \eqref{eq:X-asymptotic} and  weighted Sobolev inequality \cite[Theorem~1.2(iv)]{Bartnik},
\[
|\zeta_t|+r|\nabla\zeta_t|\le C\left(r^{3-n}+r^\delta\right)\le Cr^{3-n},\qquad \delta<3-n.
\]
Choose \(R\) sufficiently large that \(E_{R/2}\subset E_{3R_2}\) and
\[
\sup_{E_{R/2}}|\nabla\zeta _t|<\frac{1}{2},\qquad \sup_{E_R}|\zeta_t|<\frac{R}{4}.
\]
Then \(X_t|_{E_R}\) is a coordinate chart.
\end{proof}

\section{Rigidity}

\subsection{The weighted Bochner argument}

Assume \(n\ge 4\). For simplicity, throughout this subsection, \(\nabla,\Delta,\operatorname{Hess},\operatorname{div}\), inner products and norms are taken with respect to \(g_t\). For \(H\in C^\infty(M)\) and \(Y\in\Gamma(TM)\), write \(\operatorname{div}_HY:=e^H\operatorname{div}(e^{-H}Y)\). We abbreviate \(\operatorname{Ric}_{F_t}^{4-n}(g_t)\) to \(\operatorname{Ric}_{F_t}^{4-n}\). For \(X\in C^\infty(M)\), write
\[
\operatorname{Hess}^{\circ}X:=\operatorname{Hess}X-\frac{1}{n}(\Delta X)g_t,\qquad \Gamma_n:=\frac{(n-1)(n-3)}{2(n-2)}>0.
\]

\begin{lemma}\label{lem:weighted-bochner}
If \(X\in C^\infty(M)\) satisfies \(\Delta_{F_t+T_t}X=0\), then
\begin{equation}\label{eq:modified-bochner}
\begin{aligned}
&\operatorname{div}_{F_t}\left(\frac{1}{2}\nabla|\nabla X|^2-(\Delta_{F_t}X)\nabla X\right)\\
=&\frac{1}{2}\Delta_{F_t}|\nabla X|^2-\operatorname{div}_{F_t}\left((\Delta_{F_t}X)\nabla X\right)\\
=&|\operatorname{Hess}^{\circ}X|^2+\operatorname{Ric}_{F_t}^{4-n}(\nabla X,\nabla X)+\Gamma_n\langle\nabla T_t,\nabla X\rangle^2=:e_t(X).
\end{aligned}
\end{equation}
\end{lemma}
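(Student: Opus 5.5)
The plan is a direct pointwise computation: combine the standard weighted Bochner formula with the drift equation \(\Delta_{F_t+T_t}X=0\), and use the relation \(F_t=(n-1)T_t\) to turn every first-order term into a multiple of \(a:=\langle\nabla T_t,\nabla X\rangle\). No integration is involved, so the lemma is purely algebraic once the standard identities are recorded.

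\emph{Step 1.} The first equality in \eqref{eq:modified-bochner} is just the definition of \(\operatorname{div}_{F_t}\). Indeed, \(\operatorname{div}_{F_t}\nabla\varphi=\Delta_{F_t}\varphi\), and \(\operatorname{div}_{F_t}\) is linear.

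\emph{Step 2.} I will invoke the classical weighted Bochner identity, which follows from the unweighted one together with \(\nabla\langle\nabla F,\nabla X\rangle=\operatorname{Hess}F(\nabla X)+\operatorname{Hess}X(\nabla F)\):
\[
\tfrac12\Delta_{F}|\nabla X|^2=|\operatorname{Hess}X|^2+\langle\nabla X,\nabla\Delta_FX\rangle+(\operatorname{Ric}+\operatorname{Hess}F)(\nabla X,\nabla X).
\]
I will also use the product rule
\[
\operatorname{div}_F\bigl((\Delta_FX)\nabla X\bigr)=(\Delta_FX)^2+\langle\nabla\Delta_FX,\nabla X\rangle .
\]
Subtracting the second identity from the first cancels the third-order terms and leaves
\[
|\operatorname{Hess}X|^2-(\Delta_{F_t}X)^2+(\operatorname{Ric}+\operatorname{Hess}F_t)(\nabla X,\nabla X).
\]

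\emph{Step 3.} Now express everything through \(a\).

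\begin{itemize}
\item The hypothesis gives \(\Delta_{F_t}X=a\).
\item Since \(F_t=(n-1)T_t\), we have \(\langle\nabla F_t,\nabla X\rangle=(n-1)a\).
\item Hence \(\Delta X=\Delta_{F_t}X+\langle\nabla F_t,\nabla X\rangle=na\), so \(|\operatorname{Hess}X|^2=|\operatorname{Hess}^\circ X|^2+na^2\).
\item With \(N=4-n\) we have \(N-n=-2(n-2)\), so
\[
\operatorname{Ric}+\operatorname{Hess}F_t=\operatorname{Ric}^{4-n}_{F_t}-\frac{1}{2(n-2)}dF_t\otimes dF_t .
\]
Evaluated on \(\nabla X\), this contributes \(-\frac{(n-1)^2}{2(n-2)}a^2\).
\end{itemize}

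Collecting the \(a^2\) terms gives
\[
na^2-a^2-\frac{(n-1)^2}{2(n-2)}a^2=\frac{(n-1)(n-3)}{2(n-2)}a^2=\Gamma_na^2,
\]
which is exactly \(e_t(X)\).

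The only delicate point is bookkeeping: the sign of \(1/(N-n)\) for \(N=4-n\), and the factor \(n-1\) relating \(F_t\) and \(T_t\). The choice \(T_t=F_t/(n-1)\) is precisely what makes the trace part \((\Delta X)^2/n\) combine with the Bakry--\'Emery correction to give a positive coefficient when \(n\ge4\).
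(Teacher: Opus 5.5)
Your proposal is correct and matches the paper's proof essentially step for step. Like the paper, you subtract the weighted product rule from the weighted Bochner identity, use \(\Delta X=n\langle\nabla T_t,\nabla X\rangle\) for the trace decomposition, rewrite \(\operatorname{Ric}+\operatorname{Hess}F_t=\operatorname{Ric}^{4-n}_{F_t}-\frac{1}{2(n-2)}dF_t\otimes dF_t\), and collect the coefficient \(n-1-\frac{(n-1)^2}{2(n-2)}=\Gamma_n\).
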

\begin{proof}
Since \(F_t+T_t=nT_t\), equation \(\Delta_{F_t+T_t}X=0\) gives
\[
\Delta X=n\langle\nabla T_t,\nabla X\rangle,\qquad\langle\nabla F_t,\nabla X\rangle=(n-1)\Delta_{F_t}X.
\]
By the weighted Bochner formula,
\begin{equation}\label{eq:weighted-bochner-identity}
\frac{1}{2}\Delta_{F_t}|\nabla X|^2=|\operatorname{Hess}X|^2+\langle\nabla X,\nabla\Delta_{F_t}X\rangle+(\operatorname{Ric}_{g_t}+\operatorname{Hess}F_t)(\nabla X,\nabla X).
\end{equation}
Moreover, from \(\operatorname{div}_{F_t}Y=e^{F_t}\operatorname{div}(e^{-F_t}Y)\),
\begin{equation}\label{eq:weighted-divergence-identity}
\operatorname{div}_{F_t}\left((\Delta_{F_t}X)\nabla X\right)=\langle\nabla\Delta_{F_t}X,\nabla X\rangle+(\Delta_{F_t}X)^2.
\end{equation}
 Since \(\Delta X=n\langle\nabla T_t,\nabla X\rangle\), the trace decomposition becomes
\begin{equation}\label{eq:hessian-trace-decomposition}
|\operatorname{Hess}X|^2=|\operatorname{Hess}^{\circ}X|^2+n\langle\nabla T_t,\nabla X\rangle^2.
\end{equation}
Moreover, by definition
\[
\operatorname{Ric}_{g_t}+\operatorname{Hess}F_t=\operatorname{Ric}_{F_t}^{4-n}-\frac{1}{2(n-2)}dF_t\otimes dF_t.
\]
Since \(\langle\nabla F_t,\nabla X\rangle=(n-1)\Delta_{F_t}X\), we also have
\[
(dF_t\otimes dF_t)(\nabla X,\nabla X)=(n-1)^2(\Delta_{F_t}X)^2.
\]

Subtracting \eqref{eq:weighted-divergence-identity} from \eqref{eq:weighted-bochner-identity} and using identities above together with \(dF_t=(n-1)dT_t\), we obtain
\[
\begin{aligned}
    &\frac{1}{2}\Delta_{F_t}|\nabla X|^2-\operatorname{div}_{F_t}\left(\langle\nabla T_t,\nabla X\rangle\nabla X\right)\\
    =&|\operatorname{Hess}^{\circ} X|^2+\operatorname{Ric}_{F_t}^{4-n}(\nabla X,\nabla X)+\left(n-1-\frac{(n-1)^2}{2(n-2)}\right)\langle\nabla T_t,\nabla X\rangle^2\\
    =&|\operatorname{Hess}^{\circ}X|^2+\operatorname{Ric}_{F_t}^{4-n}(\nabla X,\nabla X)+\Gamma_n\langle\nabla T_t,\nabla X\rangle^2.
\end{aligned}
\]
\end{proof}
Note that by \eqref{eq:weighted-ricci-nonnegative} and \eqref{eq:hessian-trace-decomposition},
\begin{equation}\label{eq:pointwise-hessian-control}
|\operatorname{Hess}X|^2\le\max\left\{1,\frac{n}{\Gamma_n}\right\}e_t(X).
\end{equation}
\begin{remark}
If \(X\) were instead \(\Delta_{F_t}\)-harmonic, the same calculation would give
\[
\frac{1}{2}\Delta_{F_t}|\nabla X|^2=|\operatorname{Hess}^{\circ}X|^2+\operatorname{Ric}_{F_t}^{4-n}(\nabla X,\nabla X)+\frac{n-4}{2n(n-2)}\langle\nabla F_t,\nabla X\rangle^2.
\]
The last coefficient is positive for \(n\ge 5\) but vanishes for \(n=4\). This fits the heuristic that, in the range \(1/N\le 1/n\), geometry improves as \(1/N\) increases.
The drift \(T_t=F_t/(n-1)\) allows us to treat dimensions \(n\ge 4\) uniformly. 
\end{remark}

\begin{lemma}\label{lem:coordinate-boundary-integral}
Let \((M^n,h,E)\) be asymptotically flat with arbitrary ends and let \(X=(X^1,\ldots,X^n)\) be an asymptotically flat coordinate system on \(E\), and set \(r:=|X|\). Suppose that \(F\in C^\infty(M)\) satisfies
\[
F=br^{2-n}+O_2(r^{2-n-\sigma})
\]
for some \(b\in\mathbb R\) and \(\sigma>0\). For \(S_R:=\{|X|=R\}\),
\begin{equation}\label{eq:coordinate-boundary-integral}
    \begin{aligned}
        &\lim_{R\to\infty}\sum_{\alpha=1}^n\int_{S_R}e^{-F}\left[\frac{1}{2}\partial_\nu|\nabla_hX^\alpha|_h^2-(\Delta_{h,F}X^\alpha)\partial_\nu X^\alpha\right]d\sigma_h\\
        \qquad&=2(n-1)|\mathbb S^{n-1}|m_E(h)-(n-2)|\mathbb S^{n-1}|b,
    \end{aligned}
\end{equation}
where \(\nu\) is the \(h\)-unit normal pointing toward infinity.
\end{lemma}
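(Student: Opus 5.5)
The plan is a direct computation in the asymptotically flat chart \(X\). Write \(h_{ij}=h(\partial_{X^i},\partial_{X^j})=\delta_{ij}+O_2(r^{-\tau})\) with \(\tau>(n-2)/2\). We take for granted that the ADM mass may be computed in any asymptotically flat coordinate system (Bartnik's coordinate invariance~\cite{Bartnik}); hence \(m_E(h)\) may be evaluated in the \(X\)-coordinates. Throughout, any quantity that is \(O(r^{1-n-\epsilon})\) for some \(\epsilon>0\) integrates to \(o(1)\) over \(S_R\), since \(|S_R|\sim R^{n-1}\). We therefore expand every ingredient to first order and discard such terms.

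\textbf{Step 1: pointwise expansions.} Since \(X^\alpha\) are coordinates, \(|\nabla_hX^\alpha|_h^2=h^{\alpha\alpha}\). Hence
\[
\sum_\alpha|\nabla_hX^\alpha|^2_h=n-\sum_j(h_{jj}-1)+O_1(r^{-2\tau}),
\]
and therefore
\[
\tfrac12\sum_\alpha\partial_\nu|\nabla_hX^\alpha|_h^2=-\tfrac12\nu^i\partial_ih_{jj}+O(r^{-2\tau-1}).
\]
Next, \(\Delta_hX^\alpha=-h^{ij}\Gamma^\alpha_{ij}=-(\partial_jh_{\alpha j}-\tfrac12\partial_\alpha h_{jj})+O(r^{-2\tau-1})\) and \(\langle\nabla_hF,\nabla_hX^\alpha\rangle_h=h^{\alpha j}\partial_jF=\partial_\alpha F+O(r^{1-n-\tau})\). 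Also \(\partial_\nu X^\alpha=\nu^\alpha\), where the \(h\)-unit normal satisfies \(\nu^\alpha=X^\alpha/r+O(r^{-\tau})\). Combining these,
\[
-\sum_\alpha(\Delta_{h,F}X^\alpha)\partial_\nu X^\alpha=\nu^i\left(\partial_jh_{ij}-\tfrac12\partial_ih_{jj}\right)+\partial_rF+O(r^{-2\tau-1})+O(r^{1-n-\tau}).
\]

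\textbf{Step 2: assemble the integrand.} Adding the two expansions, the integrand in \eqref{eq:coordinate-boundary-integral} becomes
\[
e^{-F}\left[\nu^i(\partial_jh_{ij}-\partial_ih_{jj})+\partial_rF\right]+O(r^{-2\tau-1})+O(r^{1-n-\tau}).
\]
We now replace each factor by its Euclidean counterpart, checking that every replacement only adds terms that are \(O(r^{1-n-\epsilon})\).
\begin{itemize}
\item \(e^{-F}=1+O(r^{2-n})\). Multiplying the bracket, which is \(O(r^{-\tau-1})+O(r^{1-n})\), by \(O(r^{2-n})\) gives admissible errors.
\item \(d\sigma_h=(1+O(r^{-\tau}))d\sigma\), and the Euclidean and \(h\)-normals differ by \(O(r^{-\tau})\). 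Against the bracket these again give products of size \(O(r^{-2\tau-1})\) or \(O(r^{1-n-\tau})\).
\item \(\partial_rF=-(n-2)br^{1-n}+O(r^{1-n-\sigma})\), which follows from the hypothesis \(F=br^{2-n}+O_2(r^{2-n-\sigma})\).
\end{itemize}

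\textbf{Step 3: pass to the limit.} Integrating over \(S_R\) and letting \(R\to\infty\), the first term converges to \(2(n-1)|\mathbb S^{n-1}|m_E(h)\) by the definition of the ADM mass in the \(X\)-chart. The second term gives
\[
\int_{S_R}-(n-2)bR^{1-n}\,d\sigma=-(n-2)|\mathbb S^{n-1}|b.
\]
This is exactly the right-hand side of \eqref{eq:coordinate-boundary-integral}.

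The only delicate point is the bookkeeping of the quadratic errors. These are \(O(r^{-2\tau-1})\), so after integration they are \(O(R^{n-2-2\tau})\). This tends to zero precisely because \(\tau>(n-2)/2\), so the hypothesis on the decay rate is used sharply here. The remaining errors involve \(F\) and decay at least like \(r^{1-n-\min\{\tau,\sigma\}}\), so they are harmless.
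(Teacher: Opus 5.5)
Your proposal is correct and follows the paper's proof essentially step for step. Both arguments expand $h^{\alpha\alpha}$, $\Delta_hX^\alpha=-h^{ij}\Gamma^\alpha_{ij}$ and the drift term to first order, then collect the ADM flux integrand plus $\partial_{\nu_0}F$, and discard errors of size $O(r^{-2\tau-1})$ and $O(r^{1-n-\tau})$ using $2\tau+1>n-1$. Your explicit appeal to Bartnik's coordinate invariance, to evaluate $m_E(h)$ in the $X$-chart, makes precise a point the paper uses only implicitly.
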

\begin{proof}
    Write \(h_{ij}=\delta_{ij}+p_{ij}\). Since \(X\) is an asymptotically flat coordinate system, \(p_{ij}=O_2(r^{-q})\) for some \(q>(n-2)/2\). Let \(\nu_0\) and \(d\sigma_0\) denote the Euclidean unit normal and area measure in the \(X\)-coordinates. Then
    \[
    h^{ij}=\delta^{ij}-p_{ij}+O_2(r^{-2q}),\qquad\nu=\nu_0+O(r^{-q}),\qquad d\sigma_h=(1+O(r^{-q}))d\sigma_0.
    \]
    Noting that \(\Gamma_{ij}^\alpha=\frac{1}{2}\left(\partial_ip_{\alpha j}+\partial_jp_{i\alpha}-\partial_\alpha p_{ij}\right)+O(r^{-2q-1})\), we have
    \[
    \Delta_hX^\alpha=-h^{ij}\Gamma_{ij}^\alpha=-\partial_jp_{j\alpha}+\frac{1}{2}\partial_\alpha p_{jj}+O(r^{-2q-1}).
    \]
  Combining with \(|\nabla_hX^\alpha|_h^2=h^{\alpha\alpha}\), \(\partial_\nu X^\alpha=\nu_0^\alpha+O(r^{-q})\) and \(|dF|=O(r^{1-n})\), we obtain
  \[
  \begin{aligned}
      &\sum_{\alpha=1}^n\frac{1}{2}\partial_\nu|\nabla_hX^\alpha|_h^2=-\frac{1}{2}\partial_ip_{jj}\nu_0^i+O(r^{-2q-1}),\\
      &-\sum_{\alpha=1}^n(\Delta_hX^\alpha)\partial_\nu X^\alpha=\left(\partial_jp_{ji}-\frac{1}{2}\partial_ip_{jj}\right)\nu_0^i+O(r^{-2q-1}),\\
      &\sum_{\alpha=1}^n\langle\nabla_hF,\nabla_hX^\alpha\rangle_h\partial_\nu X^\alpha=\partial_iF\nu_0^i+O(r^{1-n-q}).
  \end{aligned}
  \]
 Thus
  \[
  \begin{aligned}
  &\sum_{\alpha=1}^n\left[\frac{1}{2}\partial_\nu|\nabla_hX^\alpha|_h^2-(\Delta_{h,F}X^\alpha)\partial_\nu X^\alpha\right]\\
  =&(\partial_jp_{ij}-\partial_ip_{jj}+\partial_iF)\nu_0^i+O(r^{-2q-1})+O(r^{1-n-q}).
  \end{aligned}
  \]
  Moreover,
  \[
  e^{-F}d\sigma_h=\left(1+O(r^{-q})+O(r^{2-n})\right)d\sigma_0.
  \]
  Since \(2q+1>n-1\),
  \[
  \begin{aligned}
       &\lim_{R\to\infty}\sum_{\alpha=1}^n\int_{S_R}e^{-F}\left[\frac{1}{2}\partial_\nu|\nabla_hX^\alpha|_h^2-(\Delta_{h,F}X^\alpha)\partial_\nu X^\alpha\right]d\sigma_h\\
       \qquad&=\lim_{R\to\infty}\int_{S_R}(\partial_jp_{ij}-\partial_ip_{jj}+\partial_iF)\nu_0^id\sigma_0\\
       \qquad&=\lim_{R\to\infty}\left[\int_{S_R}(\partial_jp_{ij}-\partial_ip_{jj})\nu_0^id\sigma_0+\int_{S_R}\partial_{\nu_0}Fd\sigma_0\right]\\
       \qquad&=2(n-1)|\mathbb S^{n-1}|m_E(h)-(n-2)|\mathbb S^{n-1}|b.
  \end{aligned}
  \]
\end{proof}

\begin{proposition}\label{prop:mass-controls-hessian}
There exists \(C=C(n)>0\) such that, for every \(0<t<1\), the functions \(X_t^i\) from Proposition~\ref{prop:drift-coordinates} satisfy
\begin{equation}\label{eq:mass-controls-hessian}
\sum_{i=1}^n\int_M|\operatorname{Hess}_{g_t}X_t^i|_{g_t}^2d\mathfrak m_t\le Cm_E(g_t)=2Ct\mathfrak c_E.
\end{equation}
\end{proposition}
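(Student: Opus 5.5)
The plan is to integrate the identity of Lemma~\ref{lem:weighted-bochner} against the weighted measure \(d\mathfrak m_t=e^{-F_t}d\mu_{g_t}\) over a large region. The boundary contribution at \(E\) is computed by Lemma~\ref{lem:coordinate-boundary-integral}. The contributions from the other ends are shown to vanish in the limit using the finite energy bound \eqref{eq:finite-energy-arbitrary-ends}.

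\textbf{Step 1: cut-off integration.} Since \(\operatorname{div}_{F_t}\) is the divergence for \(d\mathfrak m_t\), Lemma~\ref{lem:weighted-bochner} says \(e_t(X_t^i)\,d\mathfrak m_t\) is an exact divergence of \(W^i:=\frac12\nabla|\nabla X_t^i|^2-(\Delta_{F_t}X_t^i)\nabla X_t^i\). The function \(e_t\) is nonnegative by \eqref{eq:weighted-ricci-nonnegative}.

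Pick a cutoff \(\chi_k\) supported in \(M\setminus E_{2R}\) (the non-\(E\) ends), equal to \(1\) on larger and larger compact sets, with \(|\nabla_{g_t}\chi_k|\le 1/k\) on its transition region. Such cutoffs exist because \(g_t\) is complete. Set \(\Phi_k:=\chi_k\) on \(M\setminus E_{2R}\) and \(\Phi_k:=1\) on \(E_{2R}\setminus E_{\rho}\), cut off at the coordinate sphere \(S_\rho=\{|X_t|=\rho\}\). Integrate \(\Phi_k\,e_t\,d\mathfrak m_t\) and integrate by parts:
\[
\int_M \Phi_k e_t(X^i_t)\,d\mathfrak m_t=\int_{S_\rho}e^{-F_t}\langle W^i,\nu\rangle d\sigma_{g_t}-\int_M\langle\nabla\chi_k,W^i\rangle d\mathfrak m_t.
\]

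\textbf{Step 2: the boundary term at \(E\).} By Proposition~\ref{prop:drift-coordinates}, \(X_t\) is an asymptotically flat chart for \(g_t\) on \(E_R\). This uses \eqref{eq:X-asymptotic}, Lemma~\ref{lem:harmonic-asymptotics}, and \(g_t=V_t^{4/(n-2)}\bar g\). Also \(F_t=2\log V_t=2\mathfrak c_t r^{2-n}+O_2(tr^{2-n-\sigma})\), with \(r\) measured in the \(X_t\)-chart since \(X_t-x=O(r^{3-n})\). Lemma~\ref{lem:coordinate-boundary-integral} with \(b=2\mathfrak c_t\) then gives the limit as \(\rho\to\infty\) of the summed boundary terms:
\[
2(n-1)|\mathbb S^{n-1}|m_E(g_t)-2(n-2)|\mathbb S^{n-1}|\mathfrak c_t.
\]
By \eqref{eq:mass-gt} this is at most \(2(n-1)|\mathbb S^{n-1}|m_E(g_t)\). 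One check is needed here: the drift term \(\langle\nabla T_t,\nabla X\rangle\) is absorbed, because the lemma is stated for \(\Delta_{h,F}\) with \(F=F_t\), exactly matching \(W^i\).

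\textbf{Step 3: the other ends (main obstacle).} We must show \(\int\langle\nabla\chi_k,W^i\rangle d\mathfrak m_t\) is \(o(1)\), or at least bounded by a small multiple of the left side.

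\begin{itemize}
\item For the term \((\Delta_{F_t}X)\nabla X\), use \(\Delta_{F_t}X=\langle\nabla T_t,\nabla X\rangle\). This is bounded by \(\frac1k\int_{\operatorname{supp}\nabla\chi_k}|\langle\nabla T_t,\nabla X\rangle||\nabla X|d\mathfrak m_t\). Cauchy--Schwarz bounds it by \(\frac1k(\int e_t/\Gamma_n)^{1/2}(\int_{\operatorname{supp}\nabla\chi_k}|\nabla X|^2d\mathfrak m_t)^{1/2}\).
\item For \(\frac12\nabla|\nabla X|^2=\operatorname{Hess}X(\nabla X)\), use \eqref{eq:pointwise-hessian-control}. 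This gives a bound \(\frac Ck(\int\Phi e_t)^{1/2}(\int_{\operatorname{supp}\nabla\chi_k}|\nabla X|^2)^{1/2}\).
\end{itemize}

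By \eqref{eq:finite-energy-arbitrary-ends}, the gradient energy on the annuli tends to zero. Absorbing with Young's inequality then yields, after \(k\to\infty\) and \(\rho\to\infty\),
\[
\sum_i\int_M e_t(X^i_t)\,d\mathfrak m_t\le 2(n-1)|\mathbb S^{n-1}|m_E(g_t).
\]
For the absorption to be legitimate, it is cleaner to use \(\chi_k^2\) in place of \(\chi_k\), so that \(\int\chi_k^2e_t\) appears on both sides. This is where I expect care: the Hessian term must be controlled using only the weighted \(L^2\) gradient bound on ends with no asymptotics. The \(\chi^2\) trick handles this, because \(|\nabla X|^2\) is integrable against \(d\mathfrak m_t\) on those ends.

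\textbf{Step 4: conclusion.} Finally, \eqref{eq:pointwise-hessian-control} converts \(e_t\) into \(|\operatorname{Hess}X^i_t|^2\) with constant \(\max\{1,n/\Gamma_n\}\), giving \eqref{eq:mass-controls-hessian} with \(C=C(n)\), and \(m_E(g_t)=2t\mathfrak c_E\).
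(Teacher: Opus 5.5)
Your proposal is correct and follows the paper's proof. Both integrate the weighted Bochner identity against a squared cutoff; the paper uses \(\eta_L^2\) with \(\eta_L=\chi(d_{g_t}(\cdot,S_s^t)/L)\). Both evaluate the flux at \(E\) by Lemma~\ref{lem:coordinate-boundary-integral} with \(b=2\mathfrak c_t\), and both remove the cutoff error with Young's inequality and \eqref{eq:finite-energy-arbitrary-ends}.

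The differences are cosmetic. The paper writes the flux as \(\operatorname{Hess}^{\circ}X_t^i(\nabla X_t^i,\cdot)\), using \(\Delta X_t^i=n\Delta_{F_t}X_t^i\), and computes the boundary limit exactly as \(n|\mathbb S^{n-1}|m_E(g_t)\). You instead bound the two pieces of the flux separately and drop the nonpositive term \(-2(n-2)|\mathbb S^{n-1}|\mathfrak c_t\). Either way the constant depends only on \(n\).
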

\begin{proof}
    By \eqref{eq:harmonic-asymptotics} and \eqref{eq:X-asymptotic}, \(X_t\) is an asymptotically flat coordinate system for \(g_t\). Moreover, \eqref{eq:Tt-uniform-asymptotic} and \eqref{eq:X-asymptotic} give
    \[
    F_t=2\mathfrak c_t|X_t|^{2-n}+O_2\left(|X_t|^{2-n-\sigma}\right).
    \]
Set \(S_s^t:=\{|X_t|=s\}\), and
\[
\mathcal B_{t,s}:=\sum_{i=1}^n\int_{S_s^t}e^{-F_t}\left[\frac{1}{2}\partial_\nu|\nabla X_t^i|^2-(\Delta_{F_t}X_t^i)\partial_\nu X_t^i\right]d\sigma.
\]
Lemma~\ref{lem:coordinate-boundary-integral}, applied with \(b=2\mathfrak c_t\), and \eqref{eq:mass-gt} give
\begin{equation}\label{eq:boundary-integral-value}
\lim_{s\to\infty}\mathcal B_{t,s}=2(n-1)|\mathbb S^{n-1}|m_E(g_t)-2(n-2)|\mathbb S^{n-1}|\mathfrak c_t=n|\mathbb S^{n-1}|m_E(g_t).
\end{equation}
Set \(M_s^t:=M\setminus\{|X_t|>s\}\). Fix \(\chi\in C^\infty([0,\infty),\mathbb R)\) such that \(\chi=1\) on \([0,1]\) and \(\chi=0\) on \([2,\infty)\). For \(L>0\), define \(\eta_L(p):=\chi(d_{g_t}(p,S_s^t)/L)\) for \(p\in M_s^t\). Since \(g_t\) is a complete metric, \(\eta_L\) has compact support on \(M_s^t\). Moreover,
\[
    \eta_L=1\quad\text{if }d_{g_t}(\cdot,S_s^t)\le L,\qquad |\nabla\eta_L|\le\frac{C}{L}\quad\text{a.e.}.
\]
From \eqref{eq:X-drift}, we know \(\Delta X_t^i=n\Delta_{F_t}X_t^i\). Hence
\[
\frac{1}{2}\nabla|\nabla X_t^i|^2-(\Delta_{F_t}X_t^i)\nabla X_t^i=\left(\operatorname{Hess}^{\circ}X_t^i(\nabla X_t^i,\cdot)\right)^\sharp.
\]
Multiplying \eqref{eq:modified-bochner} by \(\eta_L^2\) and integrating by parts therefore give
\[
\sum_{i=1}^n\int_{M_s^t}\eta_L^2e_t(X_t^i)d\mathfrak m_t=\mathcal B_{t,s}-2\sum_{i=1}^n\int_{M_s^t}\eta_L\operatorname{Hess}^\circ X_t^i(\nabla X_t^i,\nabla\eta_L)d\mathfrak m_t.
\]
Since \(e_t(X_t^i)\ge|\operatorname{Hess}^{\circ}X_t^i|^2\), Young's inequality yields
\[
\frac{1}{2}\sum_{i=1}^n\int_{M_s^t}\eta_L^2e_t(X_t^i)d\mathfrak m_t\le\mathcal B_{t,s}+2\|\nabla\eta_L\|_{L^\infty}^2\sum_{i=1}^n\int_{\operatorname{supp}d\eta_L}|\nabla X_t^i|^2d\mathfrak m_t.
\]
By \eqref{eq:finite-energy-arbitrary-ends}, \(\sum_{i=1}^n\int_{M_s^t}|\nabla X_t^i|^2d\mathfrak m_t<\infty\). Then letting first \(L\to \infty\) and then \(s\to\infty\), \eqref{eq:boundary-integral-value} gives
\[
\sum_{i=1}^n\int_Me_t(X_t^i)d\mathfrak m_t\le 2n|\mathbb S^{n-1}|m_E(g_t).
\]
The estimate \eqref{eq:pointwise-hessian-control} now gives the first inequality in \eqref{eq:mass-controls-hessian} and \eqref{eq:mass-gt} gives the second equality.
\end{proof}

\subsection{Proof of the main theorem}

\begin{proof}[Proof of Theorem~\ref{thm:main}]
Suppose first that \(n=3\). By Proposition~\ref{prop:equality-reduction}, \(\bar g\) is flat, and \(M\) is simply connected by \cite[Lemma~3.8]{BZ}. Hence there is a developing map
\[
X:(M,\bar g)\longrightarrow\mathbb R^3,
\]
which is a local isometry. 

Now suppose that \(n\ge 4\). As in the proof of Proposition~\ref{prop:drift-coordinates}, write \(X_t^i=\beta Y_t^i+\xi_t^i\), where \(\beta Y_t^i\) is locally uniformly bounded and \(\|\xi_t^i\|_{\mathcal D_0(\bar g)}\le C\). Lemma~\ref{lem:local-Poincare} then gives \(\|X_t^i\|_{L^2(K,\bar g)}\le C_K\) for every \(K\Subset M\). Since \(g_t\to\bar g\) and \(F_t+T_t\to 0\) locally smoothly, interior elliptic estimates give a sequence \(t_j\to 0\) and smooth functions \(X^1,\ldots,X^n\) such that
\[
X_{t_j}^i\longrightarrow X^i\quad\text{in } C_{\mathrm{loc}}^\infty(M).
\]
Since \(m_E(g_{t_j})=2t_j\mathfrak c_E\to 0\), Proposition~\ref{prop:mass-controls-hessian} gives \(\operatorname{Hess}_{\bar g}X^i=0\), so each \(dX^i\) is parallel. Moreover, in the coordinates \(x\) from Lemma~\ref{lem:harmonic-asymptotics}, the bound \eqref{eq:X-asymptotic} gives \(X^i-x^i\in W_\delta^{3,p}(E_R)\). It follows that \(\langle dX^i,dX^j\rangle_{\bar g}=\delta_{ij}\), and hence \(X\) is a local isometry.

Thus, in either case, there is a local isometry \(X\). In the coordinates \(x\),
\[
\delta_{\alpha\beta}\partial_iX^\alpha\partial_jX^\beta=\bar g_{ij},\qquad\partial_i\partial_jX^\alpha=\bar\Gamma_{ij}^k\partial_kX^\alpha=O(r^{-q-1}).
\]
It follows that \(dX=Q+O(r^{-q})\) for some \(Q\in O(n)\), and hence \(|X|/r\to 1\) as \(r\to\infty\). Thus \(X|_{\overline{E_R}}\) is proper for all sufficiently large \(R\). On the other hand, the Schoen--Yau Liouville theorem~\cite{SchoenYau88} (see \cite[Theorem~1.3]{LUY}) shows that \(X\) is injective. Thus the Jordan--Brouwer separation theorem shows that \(X(E_R)\) is the unbounded component of \(\mathbb R^n\setminus X(\partial E_R)\). Consequently, \(S:=\mathbb R^n\setminus X(M)\) is compact.

Via \(X\), we can regard \(V\) as a positive harmonic function on \(\mathbb R^n\setminus S\). Since \(V^{4/(n-2)}g_{\mathrm{Euc}}\) is complete, Karakhanyan's theorem~\cite[Theorem~1.1]{Karakhanyan} gives
\[
\operatorname{Cap}_{1+\frac{2}{n},\frac{n}{2}}(S)=0.
\]
Finally, \cite[Corollary~5.1.14]{AdamsHedberg} yields
\[
\dim_{\mathcal H}S\le n-\left(1+\frac{2}{n}\right)\frac{n}{2}=\frac{n-2}{2}.
\]
\end{proof}


\begin{thebibliography}{99}

\bibitem{AdamsHedberg}
D. R. Adams and L. I. Hedberg,
\emph{Function spaces and potential theory},
Grundlehren Math. Wiss. \textbf{314}, Springer-Verlag, Berlin, 1996.

\bibitem{Bartnik}
R. Bartnik,
\emph{The mass of an asymptotically flat manifold},
Comm. Pure Appl. Math. \textbf{39} (1986), 661--693.

\bibitem{BZ}
Y. Bi and J. Zhu,
\emph{Mass-capacity inequality modeled on conformally flat manifolds},
preprint, 2025, arXiv:2511.13215.

\bibitem{BZpsc}
Y. Bi and J. Zhu,
\emph{Positive scalar curvature obstructions via singular dimension descent},
preprint, 2026, arXiv:2606.20528.

\bibitem{Bray}
H. L. Bray,
\emph{Proof of the Riemannian Penrose inequality using the positive mass
theorem},
J. Differential Geom. \textbf{59} (2001), no. 2, 177--267.

\bibitem{BrayMiao}
H. L. Bray and P. Miao,
\emph{On the capacity of surfaces in manifolds with nonnegative scalar
curvature},
Invent. Math. \textbf{172} (2008), no. 3, 459--475.

\bibitem{BW}
S. Brendle and Y. Wang,
\emph{A dimension descent scheme for the positive mass theorem in arbitrary
dimension},
preprint, 2026, arXiv:2604.08473.


\bibitem{Case}
J. S. Case,
\emph{Smooth metric measure spaces and quasi-Einstein metrics},
Internat. J. Math. \textbf{23} (2012), no.~10, 1250110, 36 pp.

\bibitem{HirschMiaoTam}
S. Hirsch, P. Miao, and L.-F. Tam,
\emph{Monotone quantities of $p$-harmonic functions and their applications},
Pure Appl. Math. Q. \textbf{20} (2024), no. 2, 599--644.

\bibitem{Karakhanyan}
A. L. Karakhanyan,
\emph{Singular Yamabe problem for scalar flat metrics on the sphere},
Manuscripta Math. \textbf{174} (2024), 875--889.

\bibitem{LLU}
D. A. Lee, M. Lesourd, and R. Unger,
\emph{Density and positive mass theorems for incomplete manifolds},
Calc. Var. Partial Differential Equations \textbf{62} (2023),
Paper No. 194.

\bibitem{LUY}
M. Lesourd, R. Unger, and S.-T. Yau,
\emph{The positive mass theorem with arbitrary ends},
J. Differential Geom. \textbf{128} (2024), 257--293.

\bibitem{MiaoImplications}
P. Miao,
\emph{Implications of some mass-capacity inequalities},
J. Geom. Anal. \textbf{34} (2024), Paper No. 241.

\bibitem{MiaoMassCapacity}
P. Miao,
\emph{Mass, capacitary functions, and the mass-to-capacity ratio},
Peking Math. J. \textbf{8} (2025), no. 2, 351--404.

\bibitem{SchoenYau88}
R. Schoen and S.-T. Yau,
\emph{Conformally flat manifolds, Kleinian groups and scalar curvature},
Invent. Math. \textbf{92} (1988), 47--71.

\bibitem{Zhu}
J. Zhu,
\emph{Positive mass theorem with arbitrary ends and its application},
Int. Math. Res. Not. IMRN \textbf{2023} (2023), no. 11, 9880--9900.

\end{thebibliography}
\end{document}